\documentclass{amsart}
\usepackage{graphicx}

\usepackage[english]{babel}

\usepackage{amsmath, amsfonts, amssymb, graphicx, color, url, mathtools, bbold}

\usepackage[inline]{enumitem}

\usepackage{amsthm}

\usepackage{blkarray}

\usepackage[mathcal]{euscript} 

\usepackage[hang,flushmargin]{footmisc}

\usepackage[colorlinks, backref=page]{hyperref}

\usepackage{todonotes}


\newtheorem{theorem}{Theorem}[section]
\newtheorem{proposition}[theorem]{Proposition}
\newtheorem{corollary}[theorem]{Corollary}
\newtheorem{lemma}[theorem]{Lemma}

\theoremstyle{definition}
\newtheorem{definition}[theorem]{Definition}
\newtheorem{example}[theorem]{Example}

\newtheorem{remark}[theorem]{Remark}






\newcommand{\0}{\mathbf{0}}

\newcommand{\T}{\intercal} 

\newcommand{\kk}{\mathbb{k}}

\newcommand{\RR}{\mathbb{R}}

\newcommand{\ZZ}{\mathbb{Z}}



\newcommand{\cD}{\mathcal{D}}
\newcommand{\cF}{\mathcal{F}}
\newcommand{\cJ}{\mathcal{J}}

\newcommand{\cI}{\mathcal{I}}

\newcommand{\cP}{\mathcal{P}}
\newcommand{\cQ}{\mathcal{Q}}
\newcommand{\cS}{\mathcal{S}}
\newcommand{\cT}{\mathcal{T}}


\newcommand{\bfG}{\mathbf{G}}
\newcommand{\bfL}{\mathbf{L}}


\newcommand{\frC}{\mathfrak{C}}




\newcommand{\matroid}[1]{M(#1)}

\newcommand{\dual}[1]{{#1}^{\perp}} 
\newcommand{\dualM}{\dual{M}} 

\newcommand{\rank}[1]{r_{#1}} 
\newcommand{\rankdual}[1]{\rank{\dual{#1}}} 
\newcommand{\rankM}{\rank{M}} 
\newcommand{\rankdualM}{\rankdual{M}} 

\newcommand{\tutte}[1]{\cT_{#1}} 

\DeclareMathOperator{\rs}{rowspace}

\newcommand{\rankA}{\rank{\matroid{A}}}

\newcommand{\repn}[1]{V({#1})} 
\newcommand{\A}[1]{A({#1})} 

\newcommand{\nnZ}{\ZZ_{\geq 0}} 
\newcommand{\nnZd}{\nnZ^d} 

\newcommand{\nnR}{\RR_{\geq 0}} 
\newcommand{\nnRd}{\nnR^d} 



\newcommand{\nonparking}[1]{\mathcal{N}(#1)} 

\newcommand{\submod}[1]{f_{#1}} 

\newcommand{\PM}[1]{\frC({#1})} 
\newcommand{\PMA}{\PM{A}} 


\DeclareMathOperator{\im}{Im} 

\DeclareMathOperator{\Hilb}{Hilb} 


\newcommand{\polyyn}{\kk[y_1,\ldots,y_n]}

\newcommand{\polyx}{\kk\left[x_j\colon j\in {[d]}\right]} 

\newcommand{\polyy}{\kk\left[y_s\colon s\in S\right]} 

\newcommand{\SRI}[1]{\cS(#1)} 
\newcommand{\pIdeal}[1]{\cI(#1)} 
\newcommand{\pAlgebra}[1]{\cP(#1)} 
\newcommand{\sfalgebra}[1]{\cF_{#1}} 
\newcommand{\row}{r} 
\newcommand{\ringH}[1]{\Phi_{#1}}
\newcommand{\cIdeal}[1]{\cJ(#1)} 
\newcommand{\cAlgebra}[1]{\cD(#1)} 

\newcommand{\lPower}[1]{{\ell_{#1}}^{\rho_{#1}}} 
\newcommand{\lPowerH}{\lPower{H}} 

\newcommand{\gl}[2]{\bfG\bfL_{#1}(#2)}
\newcommand{\glk}[1]{\gl{#1}{\kk}}
\newcommand{\glnk}{\glk{n}}


\DeclareMathOperator{\Span}{span} 




\definecolor{darkblue}{rgb}{0,0,0.7} 
\newcommand{\darkblue}{\color{darkblue}} 
\newcommand{\defn}[1]{\emph{\darkblue #1}} 



\begin{document}

\title{On power ideals of transversal matroids and their ``parking functions''}

\author{Camilo Sarmiento}
\address{Universit\"at Leipzig, Mathematisches Institut, Leipzig, Germany}

\begin{abstract}
To a vector configuration one can associate a polynomial ideal generated by powers of linear forms, known as a power ideal, which exhibits many combinatorial features of the matroid underlying the configuration.

In this note we observe that certain power ideals associated to transversal matroids are, somewhat unexpectedly, monomial. Moreover, the (monomial) basis elements of the quotient ring defined by such a power ideal can be naturally identified with the lattice points of a remarkable convex polytope: a polymatroid, also known as generalized permutohedron. We dub the exponent vectors of these monomial basis elements ``parking functions'' of the corresponding transversal matroid.

We highlight the connection between our investigation and Stanley-Reisner theory, and relate our findings to Stanley's conjectured necessary condition on matroid $h$-vectors.
\end{abstract}

\maketitle


\section{Introduction}

Polynomial ideals generated by powers of linear forms, often called \defn{power ideals}, appear in a number of mathematical contexts. 
Notably, the dimensions of graded pieces of power ideals are the main object of study in investigations relating to Waring's problem for polynomial rings and, by work of Emsalem and Iarrobino, to ideals of fat points (see~\cite{Ger96}). 

This paper is concerned with a family of power ideals associated to a vector configuration. These were originally introduced in the context of multivariate approximation theory, mainly as a tool to study the space spanned by the local polynomial pieces of a box spline and their derivatives~\cite{dBDR91}. 
Such power ideals are known to strongly reflect combinatorial aspects of the underlying vector configuration (see e.g. Theorem~\ref{thm:tutteHilbert} below), and have generated renewed interest in recent years, owing to their rich geometry and combinatorics and to their relevance in subjects as varied as the cohomology of homogeneous manifolds and Cox rings (see~\cite{AP10, DP11, HR11} and the references therein.)
We delay a precise definition until Section~\ref{sec:pIdeals}.

In~\cite{PS04} Postnikov and Shapiro introduced and investigated a class of power ideals associated to graphs. 
Their definition can actually be seen to coincide with the one from multivariate approximation theory when a suitable vector configuration associated to a graph is taken (they were apparently unaware of such developments.)
Alongside they introduced a monomial ideal associated to a graph, and showed that both power and monomial ideals of a graph define graded quotient rings with the same Hilbert function, and vector space dimension equal to the number of spanning trees of the graph. 
Remarkably, the standard basis elements modulo the monomial ideal also form a basis for the quotient ring defined by the power ideal; their exponent vectors received the name ``$G$-parking functions'', as they specialize to the renowned parking functions. 
$G$-parking functions turn out to be intimately related to the chip-firing game on a graph, and have attracted much attention (see~\cite{BS13} and the references therein).

The motivation for this note is an attempt of the author to extend the methods of Postnikov and Shapiro beyond 
graphs. For this purpose, an obvious candidate to contemplate is a class of vector configurations associated to transversal matroids (see Section~\ref{sec:transmats} for definitions). Thus our subject matter is a family of power ideals associated to transversal matroids, with a focus on monomial bases for the quotient rings they define. 

It came as a surprise to the author that no monomial basis for such a power ideal needs to be constructed, in the first place: the power ideals we consider \emph{are} monomial (Theorem~\ref{thm:powerismono}), even though this is not at all evident from their generators, which comprise powers of linear forms of varying degrees and supports. 

A second surprise came when it was realized that the exponent vectors of the standard monomial basis for such a (monomial) power ideal can be readily identified with  the lattice points of a polymatroid (a.k.a. generalized permutahedron), a convex polytope defined by a submodular function (Corollary~\ref{cor:polymatroidmonomials}). By a (largely recreational) parking interpretation of such non-negative integer vectors, and in analogy with the $G$-parking functions of Postnikov and Shapiro, we have dubbed them ``$A$-parking functions'' associated to the set system $A$ defining the transversal matroid (see Remark~\ref{rem:Parking}). We emphasize however that a chip-firing-like interpretation of $A$-parking functions is missing. This stands in contrast with $G$-parking functions, which arise as \emph{superstable configurations} in the chip-firing terminology.

Key to our results are the classical Hall's marriage theorem, its generalization to independent transversals of a set system by Rado, and a generalization of the latter to polymatroids by McDiarmid. We refer the reader to the books of Lov\'asz and Plummer~\cite{LP86} and of Schrijver~\cite{Schr03} for a comprehensive account of these topics.

The power ideal associated to a vector configuration $V$ defines a graded ring whose Hilbert function is known to coincide with the $h$-vector of the (abstract) simplicial complex of subsets $T$ of~$V$ such that $\Span(V\setminus T)=\Span(V)$\footnote{That is, the independence complex of the matroid dual to the matroid of $V$ (cf. Remark~\ref{rem:conjStanley}).}. In this light, we believe that power ideals of vector configurations are most naturally regarded in the framework of Stanley-Reisner theory of matroids; we expand on this point of view in Section~\ref{sec:SRtheory}. In particular, our findings imply a proof of Stanley's conjecture for the class of $h$-vectors of cotransversal matroids, different (but cognate, after all) from an earlier one by Oh~\cite{Oh13}. This connection is spelled out in Remark~\ref{rem:conjStanley}.

The outline of the paper is as follows. Sections~\ref{sec:pIdeals} and~\ref{sec:transmats} collect some elementary notions and results concerning vector configurations and transversal matroids, respectively. Section~\ref{sec:transmats} includes a sample computation to illustrate our main result (Example~\ref{ex:pIdealTransMat}).  Section~\ref{sec:mainresult} presents our main results mentioned above, namely Theorem~\ref{thm:powerismono} and Corollary~\ref{cor:polymatroidmonomials}. Finally, Section~\ref{sec:SRtheory} is a brief excursion into Stanley-Reisner theory of matroids, intended to frame our investigation on power ideals.

\subsection*{Notation and conventions}

In this note we only consider finite sets and collections, so we will drop explicit mention of the hypothesis ``finite'' throughout. Given a positive integer $n$, we use the notation $[n]:=\{1,\ldots,n\}$. Given a set $S$, $|S|$ denotes the cardinality of $S$, and the notation $2^S$ stands for the power set of $S$, that is, the set of subsets of $S$. The set of nonnegative reals is denoted by $\nnR$ and the set of nonnegative integers by $\nnZ$. Given an element $q=(q_1,\ldots,q_d)\in\nnZ^d$, we write $x^q$ for the monomial in $\polyx$ with \defn{exponent vector} $q$, that is, $x^q:=\prod_{j\in[d]}x_j^{q_j}$.

\subsubsection*{Acknowledgements}
The author would like to thank Thomas Kahle for stimulating conversations, which eventually sparked the author's interest in matroids, Stanley-Reisner theory, power ideals and related objects. Computations with SageMath~\cite{sagemath}, Macaulay2~\cite{M2} and Polymake~\cite{polymake:2000} were invaluable for this work. The author would like to sincerely thank their developers and contributors.

\section{Vector configurations, their matroids and their power ideals}
\label{sec:pIdeals}

Let $\kk$ be a field. A \defn{vector configuration} over $\kk$ is a labeled collection $V=(v_s\colon s\in S)\subset\kk^d$ of (not necessarily distinct) vectors, for some $d\in\nnZ$. 
In the following, quotienting by $\Span V^\perp$ if necessary, we shall assume without loss of generality that the vectors in $V$ span $\kk^d$.

A vector configuration $V=(v_s\colon s\in S)\subset\kk^d$ provides the basic paradigm for a matroid on a ground set $S$. 
Namely, $V$ defines a matroid $M:=\matroid{V}$ whose structure is determined by its \defn{rank function} $\rankM\colon 2^S \to \nnZ$, defined by the rule:
\[
T \mapsto \dim_\kk\Span\{v_s\colon s\in T\},\quad \text{for }T\subseteq S
\]
Matroids arising from vector configurations in such a way are said to be \defn{representable (over~$\kk$)}.
The \emph{rank of the matroid} $M$ is given by $\rankM(S)$. Note that by our assumption on $V$, the rank of $M$ equals $d$. 
A subset $T\subset S$ is said to be \defn{independent} in $M$ if $\rankM(T)=|T|$, and a \defn{basis} of $M$ if, in addition,~$|T|=d$.

Matroids possess a beautiful duality theory that vastly generalizes the notion of orthogonal complement of vector subspaces in linear algebra. Given a rank-$d$ matroid $M$ on $S$, its \defn{dual matroid} is a rank-$(|S|-d)$ matroid $\dual{M}$ on $S$ whose rank function is given by $\rank{\dualM}(T)=\rank{M}(S\setminus T)-\rankM(S)+|T|$, for $T\subseteq S$ (cf.~\cite[Section 5.2]{Cra86}). 
Its independent sets are therefore the sets $T\subseteq S$ such that $\rankM(S\setminus T)=d$.

Representations of $\dualM=\dual{\matroid{V}}$ over $\kk$ can be characterized as follows. Given a vector configuration $V\subseteq \kk^d$ as above, write $\rs(V)$ for the $d$-dimensional subspace of $\kk^{|S|}$ spanned by the rows of the $d\times|S|$ matrix whose columns are the vectors in $V$. Let $W=(w_s\colon s\in S)\subseteq \kk^{|S|-d}$ be a vector configuration. Then $\matroid{W}=\dual{\matroid{V}}$ if and only if $\rs(W)=\rs(V)^\perp$.

\emph{Caveat:} Since this note is exclusively concerned with representable matroids, we will often commit the following abuse of notation. In referring to the groundset of a matroid $\matroid{V}$ (or to subsets or elements thereof), we will interchangeably mean the label set $S$ or the vector collection $V$ (or subsets or elements thereof).

Recall that a \defn{flat of $V$} is a subset of $V$ of the form\footnote{By the preceding caveat, $V\cap L$ may thus refer to a subset of $V$ or to the corresponding index subset of $S$.} $V\cap L$, where $L$ is a linear hyperplane in $\kk^d$. 
If a flat $H=V\cap L$ is maximal with respect to inclusion, it is called a \defn{hyperplane of $V$}. 
In that case, we let \defn{$\ell_H$} denote any linear form defining $L$ and write \defn{$\rho_H:=|V\setminus H|$}.

\begin{definition}
The \defn{power ideal of $V$} is the following the ideal of $\polyx$: 
\begin{equation*}
\pIdeal{V}:= \left(\lPowerH\colon H\text{ hyperplane of }V\right).
\end{equation*}
The \defn{power algebra of $V$} is the quotient $\pAlgebra{V}:=\polyx/\pIdeal{V}$.
\end{definition}

Clearly, $\pAlgebra{V}$ is a graded $\kk$-algebra, in the sense that it admits a decomposition $\pAlgebra{V}=\bigoplus_{k\in\nnZ} \pAlgebra{V}_k$, with $\pAlgebra{V}_0\cong \kk$ and $\pAlgebra{V}_k\pAlgebra{V}_l\subset \pAlgebra{V}_{k+l}$ for $k,l\in\nnZ$. We refer to $\pAlgebra{V}_k$ as the \defn{$k$-th graded component} of $\pAlgebra{V}$, and define the \defn{Hilbert series} of $\pAlgebra{V}$ as the formal power series $\Hilb(\pAlgebra{V};z):=\sum_{k\in\nnZ }\dim_\kk \pAlgebra{V}_k$.

The \defn{Tutte polynomial} of a matroid $M$ can be defined via the rank function of $M$ as follows
\[
\tutte{M}(x,y):=\sum_{T\subseteq S}(x-1)^{\rankM(S)-\rankM(T)}(y-1)^{|T|-\rankM(T)}.
\]
Many enumerative invariants of a matroid $M$ arise as specializations of $\tutte{M}(x,y)$. It occupies a special position in the present context because of the following theorem, which illustrates the combinatorial nature of power ideals.

\begin{theorem}[\cite{AP10,dBDR91,HR11}]
\label{thm:tutteHilbert}
$\displaystyle\Hilb(\pAlgebra{V};z):=z^{|S|-d}\tutte{\matroid{V}}\left(1,{z}^{-1}\right).$
\end{theorem}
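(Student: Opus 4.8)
The plan is to argue by induction on $|S|$, mirroring the deletion--contraction recursion of the Tutte polynomial. Write $M=\matroid{V}$. If $s\in S$ is neither a loop (a zero vector of $V$) nor a coloop (an element whose deletion lowers the rank), then $\tutteM=\tutte{M\setminus s}+\tutte{M/s}$, with $M\setminus s$ of rank $d$ and $M/s$ of rank $d-1$, both on $|S|-1$ elements; setting $x=1$, $y=z^{-1}$ and multiplying by $z^{|S|-d}$ turns this into
\[
z^{|S|-d}\,\tutteM(1,z^{-1})=z\cdot z^{(|S|-1)-d}\,\tutte{M\setminus s}(1,z^{-1})+z^{(|S|-1)-(d-1)}\,\tutte{M/s}(1,z^{-1}).
\]
Hence, granting the theorem for $V\setminus s$ and $V/s$, it suffices to produce a graded short exact sequence
\[
0\longrightarrow\pAlgebra{V\setminus s}(-1)\longrightarrow\pAlgebra{V}\longrightarrow\pAlgebra{V/s}\longrightarrow 0 ,
\]
alongside the base cases and the reductions for loops and coloops.

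The easy cases first. If $S=\emptyset$ then $d=0$, $\pAlgebra{V}=\kk$ and $\tutteM=1$. If $s$ is a loop, then $s$ belongs to every hyperplane of $V$ with the same $\ell_H$ and $\rho_H$ as in $V\setminus s$, so $\pIdeal{V}=\pIdeal{V\setminus s}$ and the identity for $V$ follows from that for $V\setminus s$ via $\tutteM=y\,\tutte{M\setminus s}$. If $s$ is a coloop, then $H_0:=V\cap\Span(V\setminus s)=V\setminus s$ is a hyperplane of $V$ with $\rho_{H_0}=1$, so the linear form $\ell_{H_0}$ is among the generators of $\pIdeal{V}$; in coordinates with $\ell_{H_0}=x_d$ the remaining generators reduce modulo $x_d$ precisely to the generators of $\pIdeal{V/s}$, giving $\pAlgebra{V}\cong\pAlgebra{V/s}$, and since $\tutteM=x\,\tutte{M\setminus s}=x\,\tutte{M/s}$ (deletion and contraction of a coloop agree) the identity for $V$ follows from that for $V/s$.

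The crux is the short exact sequence for an ordinary $s$, and here a little care is needed. Since $s$ is not a loop, $V$ has a hyperplane $H^*$ with $s\notin H^*$; then $\ell_{H^*}(v_s)\neq 0$, so reduction modulo $\ell_{H^*}$ identifies $\polyx/(\ell_{H^*})$ with the coordinate ring of $\kk^d/\langle v_s\rangle$, the ambient space of $V/s$. I would take $\pAlgebra{V\setminus s}(-1)\to\pAlgebra{V}$ to be multiplication by $\ell_{H^*}$ and $\pAlgebra{V}\to\pAlgebra{V/s}$ to be the composite $\pAlgebra{V}\to\polyx/(\ell_{H^*})\twoheadrightarrow\pAlgebra{V/s}$. (Multiplication by the linear form $\sum_{j\in[d]}(v_s)_jx_j$ attached to $v_s$ itself does \emph{not} work --- it fails already for $V=(e_1,e_2,e_1{+}e_2,e_1)\subset\kk^2$ --- so the choice of $H^*$ is essential.) Exactness comes down to three ideal-theoretic facts: $\ell_{H^*}\cdot\pIdeal{V\setminus s}\subseteq\pIdeal{V}$; the colon-ideal equality $\pIdeal{V}:\ell_{H^*}=\pIdeal{V\setminus s}$, giving injectivity; and that $\pIdeal{V}+(\ell_{H^*})$ is the preimage of $\pIdeal{V/s}$ under $\polyx\to\polyx/(\ell_{H^*})$, giving surjectivity with the correct kernel. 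I expect the first to be the main obstacle: for each generator $\ell_{H'}^{m}$ of $\pIdeal{V\setminus s}$ one must verify $\ell_{H^*}\ell_{H'}^{m}\in\pIdeal{V}$, which is immediate when $v_s\in\Span H'$ (then $\ell_{H'}^{m}$ is already a generator of $\pIdeal{V}$) but otherwise seems to require an interpolation identity among the generators $\ell_H^{\rho_H}$ attached to the hyperplanes $H$ of $V$ in the pencil through $\Span H^*\cap\Span H'$.

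To sidestep these computations --- and this is in effect the route of \cite{AP10,dBDR91,HR11} --- one can pass to the Macaulay--Matlis dual: $\dim_\kk(\pAlgebra{V})_k=\dim_\kk(\pIdeal{V}^{\perp})_k$, where $\pIdeal{V}^{\perp}\subset\kk[y_1,\dots,y_d]$ is the space of polynomials killed by the constant-coefficient differential operators $\ell_H(\partial)^{\rho_H}$, i.e.\ having degree $<\rho_H$ in the direction normal to $\Span H$ for every hyperplane $H$ of $V$. Up to the standard identifications this is closely related to the de Boor--Ron and Dahmen--Micchelli spaces of box-spline theory, for which the deletion--contraction recursion --- and with it the Hilbert series $z^{|S|-d}\,\tutteM(1,z^{-1})$ --- is classical.
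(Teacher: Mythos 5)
The paper does not prove Theorem~\ref{thm:tutteHilbert} at all; it is stated as a citation to~\cite{AP10,dBDR91,HR11}. So there is no in-paper argument to compare against, and the only question is whether your proposal stands on its own.

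Your deletion--contraction framework is the right shape, and it is essentially what the cited references do (though they work on the inverse system / Macaulay dual side, where the exactness is cleaner to see). Your treatment of the base cases is correct: the empty case is trivial; for a loop the hyperplanes and all $\rho_H$ are unchanged, so $\pIdeal{V}=\pIdeal{V\setminus s}$; and for a coloop, $H_0=V\setminus s$ is a hyperplane with $\rho_{H_0}=1$, and after normalizing $\ell_{H_0}=x_d$ the remaining generators descend to $\pIdeal{V/s}$. The choice to multiply by $\ell_{H^*}$ for a hyperplane $H^*$ avoiding $s$ rather than by the linear form attached to $v_s$ is also well taken; just note that in the example $V=(e_1,e_2,e_1+e_2,e_1)$ you should specify $s$ to be $e_1+e_2$ --- if $s$ is the repeated $e_1$ then $\sum_j(v_s)_jx_j=x_1$ coincides with $\ell_{H^*}$ and multiplication by it \emph{does} work.

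The genuine gap is exactly the one you flag and then leave open: the three ideal-theoretic facts that make the sequence exact, above all $\ell_{H^*}\cdot\pIdeal{V\setminus s}\subseteq\pIdeal{V}$. The only nontrivial case is a hyperplane $H'$ of $V\setminus s$ with $v_s\notin\Span H'$ (so $\rho_{H'}^{V}=\rho_{H'}^{V\setminus s}+1$ and $\ell_{H'}^{\rho_{H'}^{V\setminus s}}$ is \emph{not} a generator of $\pIdeal{V}$), and showing $\ell_{H^*}\,\ell_{H'}^{\rho_{H'}^{V\setminus s}}\in\pIdeal{V}$ really does require a pencil/interpolation identity among the generators $\ell_H^{\rho_H}$ for $H$ through $\Span H^*\cap\Span H'$; you have identified the right obstacle but not resolved it, nor have you addressed the colon-ideal equality or the preimage statement. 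The closing paragraph, which passes to the Macaulay--Matlis dual and invokes the box-spline literature, is a correct signpost to where the proof lives, but as written it is an appeal to authority rather than an argument. As it stands this is a sound and well-informed proof \emph{strategy} with the key lemma unproven, not a proof.
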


Following Ardila~\cite[Chapter 4]{Ard03} and Postnikov and Shapiro~\cite[Section 9]{PS04}, the power ideal of $V$ can be presented as the ideal of relations of certain ``squarefree algebra''. 
Concretely, denote by $\sfalgebra{V}$ the quotient of $\polyy$ by the relations:
\begin{align*}
y_s^2\quad &\text{for }s\in[n],\\
\prod_{s\in T}y_s \quad &\text{for }T\subseteq S\text{ cocircuit of }\matroid{V},
\end{align*}
and consider the $\kk$-algebra homomorphism $\ringH{V}\colon \polyx\to\sfalgebra{V}$ defined by:
\[
\ringH{V}\colon x_j \mapsto \row_j:=\sum_{s\in S}(v_s)_jy_s,
\]
where $(v_s)_j$ denotes the $j$-th coordinate of vector $v_s\in V$. 

\begin{lemma}[{\cite[Corollary~10.5]{PS04}}]
\label{lem:powerkernel}
$\displaystyle
\ker \ringH{V}=\pIdeal{V}.
$
\end{lemma}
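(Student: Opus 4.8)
The plan is to prove the two inclusions $\pIdeal{V}\subseteq\ker\ringH{V}$ and $\ker\ringH{V}\subseteq\pIdeal{V}$ separately. For the first, fix a hyperplane $H=V\cap L$ of $V$ and write $\ell_H=\sum_{j\in[d]}c_jx_j$, so that $L=\{v\in\kk^d:\langle c,v\rangle=0\}$, where $\langle c,v\rangle:=\sum_{j}c_jv_j$. Since $s\in H$ exactly when $v_s\in L$, i.e.\ when $\langle c,v_s\rangle=0$, the image
\[
\ringH{V}(\ell_H)=\sum_{j\in[d]}c_j\,\row_j=\sum_{s\in S}\langle c,v_s\rangle\,y_s=\sum_{s\in S\setminus H}\langle c,v_s\rangle\,y_s
\]
is a linear form supported exactly on $S\setminus H$, a set with $\rho_H=|S\setminus H|$ elements. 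Raising it to the power $\rho_H$ and expanding by the multinomial theorem, every monomial in which some $y_s$ occurs to a power $\geq 2$ dies by the relation $y_s^2=0$ in $\sfalgebra{V}$; as $|S\setminus H|=\rho_H$, the only term that can survive is the squarefree one supported on all of $S\setminus H$, so $\ringH{V}(\lPowerH)=\rho_H!\bigl(\prod_{s\in S\setminus H}\langle c,v_s\rangle\bigr)\prod_{s\in S\setminus H}y_s$. But $H$, being a maximal flat of $V$, is a hyperplane of the matroid $\matroid{V}$, so $S\setminus H$ is a cocircuit of $\matroid{V}$; hence $\prod_{s\in S\setminus H}y_s=0$ in $\sfalgebra{V}$, and $\lPowerH\in\ker\ringH{V}$. (This needs no hypothesis on the characteristic of $\kk$, as the monomial itself already vanishes.) Thus $\pIdeal{V}\subseteq\ker\ringH{V}$, which induces a graded surjection $\pi\colon\pAlgebra{V}\twoheadrightarrow\polyx/\ker\ringH{V}\cong\im\ringH{V}$.

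Because $\pi$ is a graded surjection of finite-dimensional graded $\kk$-algebras (finite-dimensionality of $\pAlgebra{V}$ being part of Theorem~\ref{thm:tutteHilbert}), it is an isomorphism --- equivalently $\ker\ringH{V}=\pIdeal{V}$ --- if and only if $\dim_\kk\im\ringH{V}=\dim_\kk\pAlgebra{V}$; and by Theorem~\ref{thm:tutteHilbert} at $z=1$ the latter equals $\tutte{\matroid{V}}(1,1)$, the number of bases of $\matroid{V}$. So the task reduces to exhibiting that many linearly independent elements of $\im\ringH{V}=\kk[\row_1,\dots,\row_d]\subseteq\sfalgebra{V}$. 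Here one uses that $\sfalgebra{V}$ has the $\kk$-basis $\{\prod_{s\in T}y_s : T\text{ independent in }\dual{\matroid{V}}\}$, graded by $|T|$. Fix a linear order on $S$, and let $\mathrm{EA}(B)\subseteq S\setminus B$ and $e(B)=|\mathrm{EA}(B)|$ be the externally active set and external activity of a basis $B$ of $\matroid{V}$, so $\tutte{\matroid{V}}(1,z^{-1})=\sum_B z^{-e(B)}$. I would produce, for each basis $B$, an element $g_B\in\im\ringH{V}$ of degree $|S|-d-e(B)$, formed as a product of images $\ringH{V}(\ell)$ of suitable linear forms --- one for each externally passive element of $S\setminus B$, supported on the corresponding fundamental cocircuit --- whose expansion in the monomial basis above has the single leading term $\prod_{s\in S\setminus(B\cup\mathrm{EA}(B))}y_s$ for an appropriate term order on the monomials of $\sfalgebra{V}$. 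By Crapo's interval-partition theorem for $2^S$, the maximal elements $B\cup\mathrm{EA}(B)$ of the intervals --- and hence the leading monomials of the $g_B$ --- are pairwise distinct as $B$ ranges over the bases of $\matroid{V}$; a triangularity argument then yields linear independence of $\{g_B\}$, giving $\dim_\kk\im\ringH{V}\geq\#\{\text{bases of }\matroid{V}\}$, and therefore equality. (Tracking degrees, one in fact recovers $\Hilb(\im\ringH{V};z)=\Hilb(\pAlgebra{V};z)$.)

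The first inclusion is a one-line computation; the main obstacle is the construction of the $g_B$ --- selecting the linear forms whose images multiply to $g_B$, pinning down the right term order on $\sfalgebra{V}$, and checking that the leading term is exactly the cocircuit-complement monomial indexed by $B$ --- a genuine matroid-activities bookkeeping. Two alternatives to that step: a deletion--contraction induction on $|S|$, based on the decomposition $\sfalgebra{V}\cong\sfalgebra{V/s_0}\oplus y_{s_0}\,\sfalgebra{V\setminus s_0}$ of graded vector spaces together with compatible exact sequences for the power ideals and their kernels; or simply invoking \cite[Corollary~10.5]{PS04}.
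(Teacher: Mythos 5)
Your proof of $\pIdeal{V}\subseteq\ker\ringH{V}$ is correct and matches the paper's argument, and your observation that no hypothesis on the characteristic of $\kk$ is needed (the monomial $\prod_{s\in S\setminus H}y_s$ vanishes in $\sfalgebra{V}$ regardless of whether $\rho_H!=0$) is a nice touch. The reverse inclusion, however, is only sketched. Reducing to the dimension lower bound $\dim_\kk\im\ringH{V}\geq\tutte{\matroid{V}}(1,1)$ and exhibiting that many linearly independent elements is a sound strategy, but the elements $g_B$ are never actually produced: you do not specify which linear forms are multiplied, which order on the squarefree monomials of $\sfalgebra{V}$ is used, or why the leading term of the product is the claimed cocircuit-complement monomial --- and you yourself flag this as the "main obstacle." The phrase "one for each externally passive element of $S\setminus B$, supported on the corresponding fundamental cocircuit" is also imprecise: fundamental cocircuits $C^*(b,B)$ are attached to basis elements $b\in B$, not to elements $e\notin B$, so you would need to say exactly which cocircuit (equivalently, which hyperplane $H$ and form $\ell_H$) you attach to an externally passive $e$ and why the resulting product is triangular. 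Without that, Crapo's interval partition gives distinct candidate leading monomials but no proof that your $g_B$'s actually achieve them. Your two listed alternatives (deletion--contraction, or "invoke \cite{PS04}") are named but not carried out, so they do not close the gap.

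For comparison, the paper avoids constructing an explicit basis. It introduces the cocircuit algebra $\cAlgebra{V}$, computes $\Hilb(\cAlgebra{V};z)=\Hilb(\pAlgebra{V};z)$ via Stanley--Reisner theory (Lemma~\ref{lem:preliminarySR} and Theorem~\ref{thm:isomorphism}), and then observes that the coefficient $\lambda_{T,q}$ of $x^q$ in $\prod_{s\in T}v_s(x)$ equals the coefficient $\mu_{T,q}$ of $\prod_{s\in T}y_s$ in $\prod_{j}r_j^{q_j}$, so that $\dim_\kk\cAlgebra{V}_k$ and $\dim_\kk\im(\ringH{V})_k$ are each the rank of the same matrix $(\lambda_{T,q})=(\mu_{T,q})$. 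This sidesteps any choice of term order. Your route could work --- it is essentially the external-activity basis construction from the zonotopal algebra literature --- but as written it leaves the hardest step as a plan rather than a proof.
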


The proof of Lemma~\ref{lem:powerkernel} is not directly relevant to our main result. However, we include one in Section~\ref{sec:SRtheory} to highlight the relation between power ideals and Stanley-Reisner theory.

\section{Transversal matroids and their vector configurations}
\label{sec:transmats}

Let $A$ be a \defn{set system} on a ground set $S$, that is, a labeled collection of subsets of a set~$S$. 
We use the notation $A=(\A{j}\colon j \in [d])$, where $d$ is a positive integer and $\A{j}\subseteq S$ for $j\in [d]:=\{1,\ldots, d\}$. 

A \defn{partial transversal} of $A$ is a subset $T\subseteq S$ whose elements belong to distinct members of $A$, that is, such that $T=\{s_j\colon j\in J\}$ for some $J\subseteq [d]$, where $s_j\in \A{j}$ for each $j\in J$. 
The partial transversals of $A$ constitute the independent sets of a matroid $\matroid{A}$ on the ground set $S$~\cite{Bru87}. 
Matroids arising from set systems in such a way are known as \defn{transversal matroids}. 
By removing subsets in $A$ if necessary, we may assume in the following that the rank of $\matroid{A}$ equals $d$; that this entails no loss of generality follows from~\cite[Lemma~5.1.1]{Bru87}.

The following well-known construction shows that transversal matroids are representable over $\RR$. For every $j\in[d]$ and $s\in S$ define scalars $v_{j,s}$ such that $v_{j,s}=0$ if and only if $s\notin \A{j}$, and the nonzero $v_{j,s}$'s are algebraically independent transcendentals over $\RR$. For every $s\in S$ define the vector $v_s:=(v_{j,s}\colon j\in [d])^\T\in\RR^d$. 
The resulting vector configuration $\repn{A}:=(v_s\colon s\in S)$ represents $\matroid{A}$ over $\RR$, that is, $\matroid{\repn{A}}=\matroid{A}$~\cite[Theorem 5.4.7]{Bru87}\footnote{In fact, every transversal matroid can be represented over a sufficiently large field. Concretely, if the order of the field is at least $\smash{|S|+\binom{|S|}{d-1}}$, then a representation of $\matroid{A}$ can be constructed by taking the nonzero $v_{j,s}$'s from a Zariski open dense subset of a suitable affine space (see~\cite{Atk72}).}.

Underlying this representation of transversal matroids is the following existence statement of partial transversals in the case when $|S|=d$. For future reference, we have supplemented it with the celebrated \defn{Hall's marriage theorem}, which asserts the equivalence of statements~\ref{it:transv} and~\ref{it:hall} below.

\begin{theorem}[{\cite[Theorem 8.2.1]{LP86}}]
\label{thm:hallsthm}
Let $A=(\A{j}\colon j \in [d])$ be a set system with $|S|=d$. Denote by $\det(\repn{A})$ the determinant of the $d\times d$ matrix whose columns are given by the vectors in $\repn{A}$. The following statements are equivalent:
\begin{enumerate}[label=\thelemma(\alph*)]
\item \label{it:transv} $A$ has a partial transversal of size $d$.
\item \label{it:nonzerodet} $\det(\repn{A})\neq 0$.
\item \label{it:hall} $|\A{J}|\geq |J|$ for every $J\subseteq [d]$.
\end{enumerate}
\end{theorem}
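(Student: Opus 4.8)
The plan is to prove the equivalence of the three statements by establishing the cycle $\ref{it:transv}\Rightarrow\ref{it:nonzerodet}\Rightarrow\ref{it:hall}\Rightarrow\ref{it:transv}$, with Hall's theorem (the nontrivial implication $\ref{it:hall}\Rightarrow\ref{it:transv}$) either invoked as classical or proved directly by induction.

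First, for $\ref{it:transv}\Rightarrow\ref{it:nonzerodet}$: expand $\det(\repn{A})$ by the Leibniz formula as $\sum_{\sigma\in\mathfrak{S}_d}\operatorname{sgn}(\sigma)\prod_{j\in[d]}v_{j,\sigma(j)}$. If $A$ has a partial transversal of size $d$, then since $|S|=d$ this transversal is a bijection $J\to S$, i.e. it is realized by some permutation $\tau\in\mathfrak{S}_d$ with $\tau(j)\in\A{j}$ for all $j$; the corresponding term $\prod_j v_{j,\tau(j)}$ is a nonzero product of the transcendentals. The key point is that each surviving monomial in the Leibniz expansion is, up to sign, a distinct squarefree product of the $v_{j,s}$'s indexed by a permutation, so no cancellation can occur: the monomials attached to distinct permutations are distinct as elements of the polynomial ring $\RR[v_{j,s}]$, hence the algebraically independent $v_{j,s}$ cannot satisfy the polynomial relation $\det(\repn{A})=0$. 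So $\det(\repn{A})\neq 0$.

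Next, $\ref{it:nonzerodet}\Rightarrow\ref{it:hall}$ is the contrapositive and is the slickest direction: if $|\A{J}|<|J|$ for some $J\subseteq[d]$, then the $|J|$ rows of the matrix indexed by $J$ have their support contained in the columns indexed by $\A{J}$, a set of fewer than $|J|$ columns; so these $|J|$ rows live in a coordinate subspace of dimension $<|J|$, hence are linearly dependent, hence $\det(\repn{A})=0$. Finally, $\ref{it:hall}\Rightarrow\ref{it:transv}$ is exactly Hall's marriage theorem; I would either cite it from \cite{LP86} or include the standard two-line induction (pick an element of $\A{1}$; if every proper subfamily has a strict surplus, remove it and recurse; otherwise a tight subfamily $J$ exists, recurse on it and on its complement using that the surplus condition is inherited).

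The main obstacle is the no-cancellation argument in $\ref{it:transv}\Rightarrow\ref{it:nonzerodet}$: one must argue carefully that the monomials $\prod_{j}v_{j,\sigma(j)}$ for distinct $\sigma$ are genuinely distinct monomials in the polynomial ring generated by the $v_{j,s}$ (this uses that the first index $j$ ranges over all of $[d]$, so the monomial records $\sigma$ uniquely), and that the terms with a zero factor simply drop out; once this is in place, algebraic independence of the nonzero $v_{j,s}$ immediately forbids a polynomial identity with at least one nonzero coefficient. Everything else is routine linear algebra plus the citation of Hall's theorem for the last implication.
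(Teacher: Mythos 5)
The paper does not prove this statement; it cites it as \cite[Theorem 8.2.1]{LP86}, so there is no in-paper argument to compare against. Your proof is correct and is the standard one. The cycle $\ref{it:transv}\Rightarrow\ref{it:nonzerodet}\Rightarrow\ref{it:hall}\Rightarrow\ref{it:transv}$ is well chosen: $\ref{it:nonzerodet}\Rightarrow\ref{it:hall}$ by contrapositive via the coordinate-subspace/rank-deficiency observation is clean, and $\ref{it:hall}\Rightarrow\ref{it:transv}$ is Hall's theorem, which is appropriately cited. The one point that genuinely requires care, as you identify, is the no-cancellation step in $\ref{it:transv}\Rightarrow\ref{it:nonzerodet}$, and your treatment is sound: the surviving Leibniz terms are $\pm 1$ times pairwise distinct monomials in the indeterminates $v_{j,s}$ (the monomial $\prod_{j\in[d]}v_{j,\sigma(j)}$ reconstructs $\sigma$ because the first index runs over all of $[d]$), so the determinant, viewed as a polynomial with integer coefficients in those indeterminates, is not the zero polynomial whenever a system of distinct representatives exists; specializing to algebraically independent transcendentals then yields a nonzero value. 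One small remark worth making explicit: a partial transversal of size $d$ in a ground set of size $d$ automatically uses every $\A{j}$ exactly once, which is what turns it into a bijection $\tau\colon[d]\to S$ compatible with the Leibniz sum. With that noted, the argument is complete.
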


\begin{example}
\label{ex:pIdealTransMat}
Consider the set system $A=(\{1,2,7,9,10\},\{2,6,7,8\},\{2,3,4,5,6\})$. 
To construct the vector configuration $V:=\repn{A}$ representing $M:=\matroid{A}$, we may take the nonzero coordinates of vector $v_i\in V$ as $v_{j,i}=i^j$, for $i\in[10]$. Thus $V$ consists of the columns of the following matrix:
\[
\begin{pmatrix}1&
      {2}&
      0&
      0&
      0&
      0&
      {7}&
      0&
      {9}&
      {10}\\
      0&
      {4}&
      0&
      0&
      0&
      {36}&
      {49}&
      {64}&
      0&
      0\\
      0&
      {8}&
      {27}&
      {64}&
      {125}&
      {216}&
      0&
      0&
      0&
      0\\
      \end{pmatrix}
\]
The power ideal $\pIdeal{V}\subset\RR[x_1,x_2,x_3]$ is generated by the following powers
\begin{gather*}
({-2 {x}_{2}+{x}_{3}})^{6}, ({{x}_{2}})^{4},  ({-6 {x}_{2}+{x}_{3}})^{6},  ({{x}_{3}})^{5}, 
 ({-2 {x}_{1}+{x}_{2}})^{6}, ({-8 {x}_{1}+6 {x}_{2}-{x}_{3}})^{8},\\ ({-28 {x}_{1}+4 {x}_{2}+5
      {x}_{3}})^{8}, ({-4 {x}_{1}+{x}_{3}})^{8}, ({{x}_{1}})^{5}, 
      ({-7 {x}_{1}+{x}_{2}})^{6}, ({-42
      {x}_{1}+6 {x}_{2}-{x}_{3}})^{8}. 
\end{gather*}
The power algebra $\pAlgebra{V}=\kk[x_1,x_2,x_3]/\pIdeal{V}$ has Hilbert series given by:
\[
\Hilb(\pAlgebra{V};z)=1+3 z+6 z^{2}+10 z^{3}+14
     z^{4}+16 z^{5}+12 z^{6}+8 z^{7},
\]
which, according to Theorem~\ref{thm:tutteHilbert}, is a specialization of the Tutte polynomial of $M$:
\[
\begin{aligned}
\tutte{M}(x,y)=y^{7}& + 3 y^{6} + x y^{4} + 6 y^{5} + 2 x^{2} y^{2} + 4 x y^{3} + 9 y^{4} + x^{3} + 2 x^{2} y\\& + 7 x y^{2} + 10 y^{3} + 3 x^{2} + 6 x y + 7 y^{2} + 4 x + 4 y.
\end{aligned}
\]

A computation with Macaulay2 shows that $\pIdeal{V}$ is in fact monomial, since it admits a Gr\"obner basis consisting of the following monomials:
\begin{gather*}
{x}_{2}^{4}, {x}_{3}^{5}, {x}_{1}^{5}, {x}_{2}^{2} {x}_{3}^{4}, {x}_{2}^{3}
     {x}_{3}^{3}, {x}_{1}^{3} {x}_{2}^{3}, {x}_{1}^{4} {x}_{2}^{2}, {x}_{1}^{3}
     {x}_{2} {x}_{3}^{4}, {x}_{1}^{4} {x}_{3}^{4}, {x}_{1}^{3} {x}_{2}^{2}
     {x}_{3}^{3}, {x}_{1}^{4} {x}_{2} {x}_{3}^{3}
\end{gather*}

The following $70$ monomials constitute the standard basis for $\pAlgebra{V}$:
{\footnotesize
\begin{gather*}
1, {x}_{1}, {x}_{1}^{2}, {x}_{1}^{3}, {x}_{1}^{4}, {x}_{1}^{4} {x}_{2},
      {x}_{1}^{4} {x}_{2} {x}_{3}, {x}_{1}^{4} {x}_{2} {x}_{3}^{2}, {x}_{1}^{4}
      {x}_{3}, {x}_{1}^{4} {x}_{3}^{2}, {x}_{1}^{4} {x}_{3}^{3}, {x}_{1}^{3}
      {x}_{2}, {x}_{1}^{3} {x}_{2}^{2}, {x}_{1}^{3} {x}_{2}^{2} {x}_{3},
      {x}_{1}^{3} {x}_{2}^{2} {x}_{3}^{2}, {x}_{1}^{3} {x}_{2} {x}_{3},\\
      {x}_{1}^{3} {x}_{2} {x}_{3}^{2}, {x}_{1}^{3} {x}_{2} {x}_{3}^{3},
      {x}_{1}^{3} {x}_{3}, {x}_{1}^{3} {x}_{3}^{2}, {x}_{1}^{3} {x}_{3}^{3},
      {x}_{1}^{3} {x}_{3}^{4}, {x}_{1}^{2} {x}_{2}, {x}_{1}^{2} {x}_{2}^{2},
      {x}_{1}^{2} {x}_{2}^{3}, {x}_{1}^{2} {x}_{2}^{3} {x}_{3}, 
      {x}_{1}^{2}
      {x}_{2}^{3} {x}_{3}^{2}, {x}_{1}^{2} {x}_{2}^{2} {x}_{3}, {x}_{1}^{2}
      {x}_{2}^{2} {x}_{3}^{2},\\ {x}_{1}^{2} {x}_{2}^{2} {x}_{3}^{3}, {x}_{1}^{2}
      {x}_{2} {x}_{3}, {x}_{1}^{2} {x}_{2} {x}_{3}^{2}, {x}_{1}^{2} {x}_{2}
      {x}_{3}^{3}, {x}_{1}^{2} {x}_{2} {x}_{3}^{4}, {x}_{1}^{2} {x}_{3},
      {x}_{1}^{2} {x}_{3}^{2}, {x}_{1}^{2} {x}_{3}^{3},
       {x}_{1}^{2} {x}_{3}^{4},
      {x}_{1} {x}_{2}, {x}_{1} {x}_{2}^{2}, {x}_{1} {x}_{2}^{3}, {x}_{1}
      {x}_{2}^{3} {x}_{3},\\ {x}_{1} {x}_{2}^{3} {x}_{3}^{2}, {x}_{1} {x}_{2}^{2}
      {x}_{3}, {x}_{1} {x}_{2}^{2} {x}_{3}^{2}, {x}_{1} {x}_{2}^{2} {x}_{3}^{3},
      {x}_{1} {x}_{2} {x}_{3}, {x}_{1} {x}_{2} {x}_{3}^{2}, {x}_{1} {x}_{2}
      {x}_{3}^{3}, {x}_{1} {x}_{2} {x}_{3}^{4}, {x}_{1} {x}_{3}, {x}_{1}
      {x}_{3}^{2}, {x}_{1} {x}_{3}^{3}, {x}_{1} {x}_{3}^{4},\\ {x}_{2}, {x}_{2}^{2},
      {x}_{2}^{3}, {x}_{2}^{3} {x}_{3}, {x}_{2}^{3} {x}_{3}^{2}, {x}_{2}^{2}
      {x}_{3}, {x}_{2}^{2} {x}_{3}^{2}, {x}_{2}^{2} {x}_{3}^{3}, {x}_{2} {x}_{3},
      {x}_{2} {x}_{3}^{2}, {x}_{2} {x}_{3}^{3}, {x}_{2} {x}_{3}^{4}, {x}_{3},
      {x}_{3}^{2}, {x}_{3}^{3}, {x}_{3}^{4}
\end{gather*}
}
Their exponent vectors are the lattice points of the  following polytope:
\[
\begin{aligned}
\PMA:=\left\{(q_1,q_2,q_3)^\T\in\nnR^3 \right.\colon & q_1\leq 4,\ q_2\leq 3,\ q_3\leq 4,\\ & q_1+q_2\leq5,\ q_1+q_3\leq 7,\ q_2+q_3\leq 5,\\ &\left. q_1+q_2+q_3\leq 7\right\},
\end{aligned}
\]
and are depicted\footnote{A PDF file with a 3d model of this polytope is available as an ancillary file for the arXiv version.} in Figure~\ref{fig:Polymatroid}.
\begin{figure}[htbp]
\begin{minipage}[c]{0.4\textwidth}
\centering
\includegraphics[width=\textwidth]{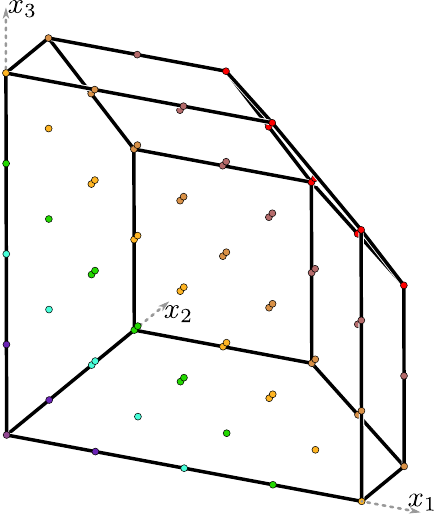}
\end{minipage}%
\begin{minipage}[c]{0.6\textwidth}
\caption{Polytope whose lattice points correspond to the exponent vectors of the standard monomial basis modulo the power ideal $\pIdeal{V}$ from Example~\ref{ex:pIdealTransMat}. Exponents of different degrees are shown in different colors to aid in visualization.}
\label{fig:Polymatroid}
\end{minipage}
\end{figure}
\end{example}

\section{Main result}
\label{sec:mainresult}

To present our main result, we fix a rank-$d$ set system $A=(\A{j}\colon j \in [d])$ on $S$, along with a representation $\repn{A}:=(v_s\colon s\in S)\subset\kk^d$, as constructed in Section~\ref{sec:transmats}. 
Given $J\subseteq[d]$ we write $A(J):=\bigcup_{j\in J}\A{j}$, and we denote by $\dual{M}$ the rank-$(|S|-d)$ matroid on $S$ dual to $\matroid{A}$, whose rank function we write $\rankdualM$ . 

\begin{theorem}
\label{thm:powerismono}
The power ideal $\pIdeal{\repn{A}}\subset\polyx$ is a monomial ideal.
\end{theorem}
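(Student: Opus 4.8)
The plan is to identify $\pIdeal{\repn{A}}$ with an explicit monomial ideal. For nonempty $J\subseteq[d]$ put $n_J:=|A(J)|-|J|+1$; note $n_J\ge1$, since $\matroid{A}$ has rank $d$ and hence $|A(J)|\ge|J|$ for all $J$. Let $\mathfrak{N}(A)\subseteq\polyx$ be the monomial ideal generated by all monomials $x^q$ with $\sum_{j\in J}q_j\ge n_J$ for some nonempty $J$; equivalently, $\mathfrak{N}(A)$ is the monomial ideal whose standard monomials $x^q$ are indexed by the lattice points of the polytope $\PMA=\{q\in\nnRd:\sum_{j\in J}q_j\le n_J-1\ \text{for all}\ \emptyset\neq J\subseteq[d]\}$ of Example~\ref{ex:pIdealTransMat} (assume also $\matroid{A}$ loopless). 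I would prove $\pIdeal{\repn{A}}=\mathfrak{N}(A)$ by the two inclusions.

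\emph{$\pIdeal{\repn{A}}\subseteq\mathfrak{N}(A)$.} This hinges on the structural fact that \emph{for every hyperplane $H$ of $\repn{A}$, setting $J:=\supp(\ell_H)$ (the set of variables occurring in $\ell_H$), one has $\rho_H=n_J$}. Granting it, expanding $\ell_H^{\rho_H}$ by the multinomial theorem produces only monomials $x^\alpha$ with $\supp(\alpha)\subseteq J$ and $\sum_{j\in J}\alpha_j=\rho_H=n_J>n_J-1$, so each such $x^\alpha$, and hence $\ell_H^{\rho_H}$, lies in $\mathfrak{N}(A)$. For the structural fact I would first reduce to the case $J=[d]$: the hyperplane $L=\{\ell_H=0\}$ is the preimage of a hyperplane $L'$ under the projection $\kk^d\to\kk^J$; any $v_s$ with $s\notin A(J)$ has all its $J$-coordinates zero, hence lies in $H$; and $H$ restricts to a full-support hyperplane $H'$, with $\rho_{H}=\rho_{H'}$, of the configuration $\repn{A|_J}$ representing the transversal matroid of $(\A{j}:j\in J)$ on ground set $A(J)$, which is loopless of rank $|J|$. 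It then suffices to show that a full-support hyperplane $H'$ of a loopless, rank-$m$ transversal configuration has exactly $m-1$ elements. Pick a basis $B\subseteq H'$ of the flat, so $|B|=m-1$; the coefficient of $x_j$ in $\ell_{H'}$ equals, up to sign, the maximal minor of the matrix with columns $(v_s:s\in B)$ obtained by deleting row $j$. By the no-cancellation mechanism underlying Theorem~\ref{thm:hallsthm} (distinct nonzero diagonal products are distinct monomials in the algebraically independent transcendentals), this minor is nonzero iff $|A(K)\cap B|\ge|K|$ for all $K\subseteq[m]\setminus\{j\}$; so full support forces $|A(K)\cap B|\ge|K|$ for every proper $K\subsetneq[m]$. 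Were there some $s_0\in H'\setminus B$, then $v_{s_0}\in\Span(B)$ would make the determinant of the $m\times m$ matrix with columns $(v_s:s\in B\cup\{s_0\})$ vanish, whence, again by Theorem~\ref{thm:hallsthm}, $|A(K)\cap(B\cup\{s_0\})|<|K|$ for some $K\subseteq[m]$ — impossible for $K=[m]$ (the $m$ elements of $B\cup\{s_0\}$ all lie in $A([m])$), and for $K\subsetneq[m]$ contradicting $|A(K)\cap(B\cup\{s_0\})|\ge|A(K)\cap B|\ge|K|$.

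\emph{$\mathfrak{N}(A)\subseteq\pIdeal{\repn{A}}$.} Here I would use Lemma~\ref{lem:powerkernel}, i.e.\ $\pIdeal{\repn{A}}=\ker\ringH{\repn{A}}$. Because $y_s^2=0$ in $\sfalgebra{\repn{A}}$,
\[
\ringH{\repn{A}}(x^q)=\prod_{j\in[d]}\row_j^{\,q_j}=\Big(\prod_{j\in[d]}q_j!\Big)\sum_{W}\Bigg(\sum_{\substack{W=\bigsqcup_{j}T_j\\ T_j\subseteq\A{j},\ |T_j|=q_j}}\ \prod_{j\in[d]}\ \prod_{s\in T_j}v_{j,s}\Bigg)\ \prod_{s\in W}y_s,
\]
where the outer sum runs over $W\subseteq S$ with $|W|=\sum_jq_j$ and $W$ containing no cocircuit of $\matroid{A}$. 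Distinct admissible decompositions $(T_j)$ of a given $W$ produce distinct monomials in the transcendentals, so nothing cancels; hence $\ringH{\repn{A}}(x^q)=0$ iff no $W$ with $\rankM(S\setminus W)=d$ admits such a decomposition — equivalently, iff the set system obtained from $(\A{j}:j\in[d])$ by repeating $\A{j}$ with multiplicity $q_j$ admits no transversal independent in $\dualM$. By Rado's theorem (or McDiarmid's polymatroid refinement) the latter occurs iff $\rankdualM(A(J))<\sum_{j\in J}q_j$ for some $J\subseteq[d]$. Finally, an element outside $A(J)$ lies in none of the $\A{j}$ with $j\in J$, so $\rankM(S\setminus A(J))\le d-|J|$ and thus $\rankdualM(A(J))=\rankM(S\setminus A(J))-d+|A(J)|\le|A(J)|-|J|=n_J-1$. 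Hence the defining inequality $\sum_{j\in J}q_j\ge n_J$ of a generator of $\mathfrak{N}(A)$ already gives $\ringH{\repn{A}}(x^q)=0$, i.e.\ $x^q\in\pIdeal{\repn{A}}$; combining with the first inclusion yields $\pIdeal{\repn{A}}=\mathfrak{N}(A)$, a monomial ideal.

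I expect the real obstacle to be the structural fact that the exponent $\rho_H$ of a generator $\ell_H^{\rho_H}$ depends on $H$ only through $\supp(\ell_H)$, via the formula $\rho_H=|A(\supp\ell_H)|-|\supp\ell_H|+1$: both the reduction to full support and the two applications of Hall's theorem (to the maximal minors of a basis matrix and then to the determinant detecting a hypothetical extra point of the flat) carry the weight of the argument, and it is exactly this fact that makes each $\ell_H^{\rho_H}$ expand into monomials of $\mathfrak{N}(A)$, so that $\pIdeal{\repn{A}}$ — against first impressions — is monomial.
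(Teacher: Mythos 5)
Your proof is correct and travels essentially the same road as the paper's: both arguments prove the two inclusions $\pIdeal{\repn{A}}\subseteq(\text{monomial ideal})\subseteq\ker\ringH{\repn{A}}=\pIdeal{\repn{A}}$, with the first inclusion resting on a Hall-type analysis of the support of $\ell_H$ and of $\rho_H$, and the second on Rado's/McDiarmid's theorem through Lemma~\ref{lem:powerkernel}.

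Two points of difference, neither of which changes the substance. First, you index the defining inequalities of the target monomial ideal by $n_J-1=|\A{J}|-|J|$, whereas the paper uses $\submod{A}(J)=\rankdualM(\A{J})$; the paper's inequality~\eqref{eq:dualrank} shows $\rankdualM(\A{J})\le|\A{J}|-|J|$, so your polytope is a priori the larger one, yet the two inclusions force it to coincide with $\PMA$ a posteriori. Your parameterization lets the first inclusion go through using only the Hall-theoretic inequality $\rho_H>|\A{J}|-|J|$ (the paper's~\eqref{eq:hyperplane}), deferring~\eqref{eq:dualrank} to the second inclusion, which is a slightly cleaner division of labor. Second, you prove a sharper structural statement than the paper needs: not merely $\rho_H\ge n_J$ but the exact value $\rho_H=n_J$ (equivalently, a full-support hyperplane of a loopless rank-$m$ transversal configuration has exactly $m-1$ elements). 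The paper establishes only the inequality $|\A{J}\cap H|<|J|$, and that is all the theorem requires; your extra argument (the ``hypothetical extra point'' determinant computation) is correct but not strictly necessary. Otherwise the applications of Hall's marriage theorem to the minors of $\ell_H$ and of Rado's theorem to the expansion of $\ringH{\repn{A}}(x^q)$ match the paper's proof step for step.
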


Our proof of Theorem~\ref{thm:powerismono} proceeds directly, by identifying the monomial ideal $\nonparking{A}\subset\polyx$ that $\pIdeal{\repn{A}}$ is equal to. To this end, we introduce a set function defined on the subsets of $[d]$ as follows:
\begin{align*}
\submod{A}\colon 2^{[d]}&\to\nnZ \\
J&\mapsto \rankdualM(A(J)),\quad\text{ for }J\subseteq [d].
\end{align*}
It is not difficult to see that $\submod{A}(I\cap J)+\submod{A}(I\cup J)\leq \submod{A}(I)+\submod{A}(J)$ holds for every $I,J\subseteq [d]$, so that $\submod{A}$ is a \emph{submodular function} (see e.g.~\cite[Section 44.1a]{Schr03}). Like every submodular function, $\submod{A}$ defines a convex polytope known as a polymatroid. 
\begin{definition}
The \defn{parking polymatroid} of $A$ is the polymatroid $\PMA$ defined by $\submod{A}$, that is, the convex polytope in $\RR^d$ defined as follows:
\[
\PMA:=\left\{q\in\nnRd \colon \sum_{j\in J} q_j \leq \submod{A}(J)\text{ for every }J\subseteq [d]\right\}.
\]
\end{definition}

\begin{definition}
The \defn{nonparking ideal} of $A$ is the ideal $\nonparking{A}$ of $\polyx$ defined by the monomials $\{x^q\colon q\in \nnZd,\ q\notin \PMA\}$.
\end{definition}

\begin{remark}
\label{rem:Parking}
It is difficult to avoid the following ``parking'' interpretation of the lattice points of $\PMA$. 
A parking lot offers a set $S$ of labeled parking spots for cars of $d$ different brands, subject to the peculiar rule that parking spot $s$ may only be occupied by cars of the brands $J_s\subseteq [d]$, for $s\in S$. 
A number of cars totalling $q_i+1>0$ cars of brand $i$, for $i\in[d]$, arrive to park in this parking lot. 
We say that the tuple $q=(q_1,\ldots,q_d)\in\nnZd$ is an \defn{``$A$-parking function''} if  all the cars manage to park while observing the parking lot's rule.

To relate this to the polymatroid $\PMA$, let $\A{j}\subseteq S$ denote the subset of parking spots that cars of brand $j\in[d]$ may park in, and $A=(\A{j}\colon j \in [d])$ be the resulting set system\footnote{We assume that $A$ has rank $d$; that is, we neglect car brands which may not park at all.}. 
Notice that if $q\in\nnZd$ is an ``$A$-parking function'', then a collection of cars comprising $q_i\geq 0$ cars of brand $i$, for $i\in[d]$, can park in a subset $T\subset S$ of the parking spots, in such a way that at least one car of each brand can still find a parking spot among the remaining ones $S\setminus T$. 
In other words, $S\setminus T$ contains a partial transversal of size $d$ or, equivalently, $T$ is an independent set in the dual matroid $\dualM=\dual{\matroid{A}}$. 

Thus $q\in\nnZd$ is an ``$A$-parking function'' if and only if $A$ has a $q$-transversal that is independent in $\dualM$, where a \defn{$q$-transversal of $A$} is defined as a subset $T=T_1\cup\ldots\cup T_d\subseteq S$, such that $T_j\subseteq \A{j}$,  $|T_j|=q_j$ and $T_j\cap T_{j'}=\emptyset$ for $j,j' \in [d]$ distinct. 
By Rado's theorem for polymatroids (see \cite[Section 44.6g]{Schr03}), $A$ has a $q$-transversal that is independent in $\dualM$ if and only if $q\in\PMA$, that is, \mbox{if and only~if}
  \[
  \sum_{j\in J}q_j\leq \rankdualM\left( \A{J}\right)=:\submod{A}(J), \quad \text{holds for all }J\subseteq[d].
  \]

This parking analogy, together with the work of Postnikov and Shapiro on power ideals and parking functions associated to graphs~\cite{PS04}, motivated the chosen names for $\PMA$ and $\nonparking{A}$. 
We should point out, however, that the term ``$A$-parking function'' is chiefly understood as a nickname, because a chip-firing-like interpretation for it is currently unavailable.
It is a prominent problem in combinatorics to find variations and higher dimensional analogs of the chip-firing game on graphs.
\end{remark}

\begin{proposition}
$\pIdeal{\repn{A}}\subseteq \nonparking{A}$.
\end{proposition}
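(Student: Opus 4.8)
The plan is to show that every generator $\lPowerH$ of $\pIdeal{\repn{A}}$ lies in the monomial ideal $\nonparking{A}$. Since $\nonparking{A}$ is monomial, a polynomial lies in it exactly when every monomial occurring in it does; and since $\PMA$ is downward closed in $\nnRd$, a monomial $x^p$ lies in $\nonparking{A}$ exactly when $p\notin\PMA$. Fix a hyperplane $H$ of $V:=\repn{A}$, write $\ell_H=\sum_{j\in[d]}a_jx_j$, and put $J:=\supp(\ell_H)=\{j\colon a_j\neq 0\}$. Then $\lPowerH$ is homogeneous of degree $\rho_H$ in the variables $\{x_j\colon j\in J\}$, so each monomial $x^p$ occurring in it has $\supp(p)\subseteq J$ and $\sum_{j\in J}p_j=\rho_H$. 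It therefore suffices to prove the single inequality $\rho_H>\submod{A}(J)$, for then $\sum_{j\in J}p_j=\rho_H>\submod{A}(J)$ shows $p\notin\PMA$ for every such $p$.

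I would unwind $\rho_H>\submod{A}(J)$ using matroid duality. First, $V\setminus H\subseteq\A{J}$: if $s\notin\A{J}$ then $v_{j,s}=0$ for all $j\in J$, hence $\ell_H(v_s)=\sum_{j\in J}a_jv_{j,s}=0$, i.e.\ $s\in H$. So $\A{J}$ is the disjoint union of $\A{J}\cap H$ and $V\setminus H$, whence $\rho_H=|V\setminus H|=|\A{J}|-|\A{J}\cap H|$. Substituting this and $\submod{A}(J)=\rankdualM(\A{J})=\rankM(S\setminus\A{J})-d+|\A{J}|$ shows that $\rho_H>\submod{A}(J)$ is equivalent to
\[
\rankM(S\setminus\A{J})+|\A{J}\cap H|\le d-1 .
\]
The first summand is easy: for $s\notin\A{J}$ the vector $v_s$ is supported on the coordinates $[d]\setminus J$, so $\rankM(S\setminus\A{J})\le d-|J|$. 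Everything thus reduces to the combinatorial estimate $|\A{J}\cap H|\le|J|-1$.

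This estimate is the crux and the step I expect to be the main obstacle. My plan is to work with the bipartite graph $\Gamma$ with vertex classes $[d]$ and $H$ and an edge $\{j,s\}$ whenever $s\in\A{j}$; its maximum matchings have size $\rankM(H)=d-1$ (the independent subsets of $H$ in $M$ are exactly the partial transversals of $(\A{i}\cap H\colon i\in[d])$), and $\A{J}\cap H$ is exactly the neighbourhood $N_\Gamma(J)$ of $J$ in $H$, so the estimate reads $|N_\Gamma(J)|\le|J|-1$. The key ingredient is a combinatorial description of $J=\supp(\ell_H)$: for $j\in[d]$, one has $j\in J$ if and only if some maximum matching of $\Gamma$ omits $j$. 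To see this, note $j\notin J$ means $a_j=0$, i.e.\ $e_j\in\ker\ell_H=\Span\{v_s\colon s\in H\}$; adjoining to $V$ the coordinate vectors $e_1,\dots,e_d$ (rescaled by generic scalars, which leaves the underlying matroid unchanged) produces a representation of the transversal matroid of $(\A{i}\cup\{\epsilon_i\}\colon i\in[d])$, where $\epsilon_i$ labels $e_i$, in which $e_j\in\Span\{v_s\colon s\in H\}$ says that $H\cup\{\epsilon_j\}$ carries no partial transversal of size $d$; as $\epsilon_j$ can represent only colour $j$, Hall's theorem (Theorem~\ref{thm:hallsthm}) rephrases this as: $(\A{i}\cap H\colon i\in[d]\setminus\{j\})$ has no system of distinct representatives of size $d-1$, i.e.\ every maximum matching of $\Gamma$ covers $j$. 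Granting this, it remains to prove the purely graph-theoretic fact that a bipartite graph with parts $U,W$, maximum matching number $|U|-1$, and $J\subseteq U$ the set of vertices omitted by some maximum matching satisfies $|N_\Gamma(J)|\le|J|-1$. I would argue with alternating paths: fixing a maximum matching $\mathcal{M}$ (which omits a unique $u_0\in U$), one checks that $J$ equals the set of $U$-vertices reachable from $u_0$ by $\mathcal{M}$-alternating paths — here the absence of $\mathcal{M}$-augmenting paths forces the relevant symmetric-difference components to be even — that $N_\Gamma(J)$ consists of $\mathcal{M}$-saturated $W$-vertices that are also reachable, and that the $\mathcal{M}$-partner map injects $N_\Gamma(J)$ into $J\setminus\{u_0\}$.

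A route avoiding matchings would be to show directly that the projections to the $J$-coordinates of $\{v_s\colon s\in\A{J}\cap H\}$ are linearly independent in $\kk^{J}$: each lies in the hyperplane $\{y\in\kk^{J}\colon\sum_{j\in J}a_jy_j=0\}$, of dimension $|J|-1$, so independence would give the bound. This independence seems to need the algebraic independence of the $v_{j,s}$ and Hall's theorem all the same, so I expect either route to rest on essentially the same combinatorial input.
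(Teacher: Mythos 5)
Your proposal is correct and follows the same overall strategy as the paper's proof. Both arguments reduce to the single inequality $\rho_H>\submod{A}(J)$ for $J=\supp(\ell_H)$, using the observation $S\setminus H\subseteq\A{J}$ together with the dual-rank and transversal-rank formulas, and both arguments bottom out on the crux estimate $|\A{J}\cap H|\le|J|-1$. You also both rely on the same combinatorial characterization of $J$, namely that $j\in J$ iff $(\A{j'}\cap H\colon j'\in[d]\setminus\{j\})$ admits a partial transversal of size $d-1$. Where you diverge is in how that characterization is obtained and then exploited: the paper gets it directly by cofactor-expanding the determinant presentation of $\ell_H$ and invoking Theorem~\ref{thm:hallsthm}, and then deduces $|J|>|\A{J}\cap H|$ rather tersely (``otherwise there would be some $j\in[d]\setminus J$\ldots''), implicitly using a Hall-defect/uniqueness argument that is left to the reader; you instead obtain the characterization via a matroid-extension trick (adjoining generic rescalings of the coordinate vectors $e_i$ to pass to the transversal matroid of $(\A{i}\cup\{\epsilon_i\})$), and you then prove the crux estimate in full via an explicit alternating-path argument in the bipartite graph $\Gamma$, showing $J$ is the set of $U$-vertices reachable from the unique $\mathcal{M}$-exposed vertex $u_0$ and that the $\mathcal{M}$-partner map injects $N_\Gamma(J)$ into $J\setminus\{u_0\}$. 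Your route is somewhat longer but is more explicit and self-contained at precisely the step the paper compresses; the determinant/cofactor route is shorter but asks the reader to fill in the Hall-defect reasoning. Your remark that the ``direct linear-independence'' alternative would rest on the same Hall-theoretic input is accurate.
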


\begin{proof}

Let $H=\repn{A}\cap L$ be a hyperplane of $\repn{A}$ defined (up to scalar multiple) by the linear form $\ell_H\in\polyx$. 
Let $J\subseteq[d]$ be the subset of indices of the nonvanishing coefficients of $\ell_H$ and $q\in\nnZd$ be the exponent vector of a monomial in ${\ell_H}^{\rho_H}$. 
We claim that $\sum_{j\in J}q_j>\submod{A}(J)$, so that $x^q\in\nonparking{A}$.

Assume without loss of generality that $L=\Span\{v_{i_1},\ldots,v_{i_{d-1}}\}$ for some vectors $v_{i_1},\ldots,v_{i_{d-1}}\in\repn{A}$. Then $\ell_H$ can be written as the determinant of the matrix with columns given by the vectors $v_{i_1},\ldots,v_{i_{d-1}}$ and the vector $(x_1,\ldots,x_d)^\T$:
\[
\ell_H=\det\begin{pmatrix}
v_{i_1} &  \cdots & v_{i_{d-1}} & x_1 \\
\kern.6em\vline  &  & \kern.6em\vline & \vdots  \\
\kern.6em\vline  & &\kern.6em\vline  & x_d
\end{pmatrix}\in\polyx,
\]
and by Theorem~\ref{thm:hallsthm} it follows that its $j$-th coefficient is nonzero if and only if the set system\footnote{Recall that by our convention in Section~\ref{sec:pIdeals}, the notation $H$ interchangeably stands for the hyperplane $H$ and the subset of $S$ comprising the indices of vectors in $H$.} $\left(\A{j'}\cap H\colon j'\in [d]\setminus\{j\}\right)$ has a partial transversal of size $d-1$. 

By Hall's marriage theorem (cf. Theorem~\ref{thm:hallsthm}), this observation implies that $|J'|\leq|\A{J'}\cap H|$ whenever $J'\not\supseteq J$. Similarly, $|J|>|\A{J}\cap H|$ necessarily holds, because otherwise there would be some $j\in [d]\setminus J$ such that the set system $\left(\A{j'}\cap H\colon j'\in [d]\setminus\{j\}\right)$ has a partial transversal of size $d-1$, which contradicts the characterization of the vanishing coefficients of $\ell_H$. 
Now, since the $j$-th coefficient of $\ell_H$ vanishes whenever $j\notin J$, clearly $\ell_H(v_s)=0$ whenever $s\notin \A{J}$, and hence $S\setminus \A{J}\subseteq H$. Thus we find $|\A{J}|=|\A{J}\cap H|+|S\setminus H|=|\A{J}\cap H|+\rho_H$, which implies the inequality
\begin{equation}
\label{eq:hyperplane}
|\A{J}|<|J|+\sum_{j\in [d]}q_j=|J|+\sum_{j\in J}q_j
\end{equation}

On the other hand, we know that the rank function of a dual matroid can be written in terms of the rank function of the primal as follows (cf. Section~\ref{sec:pIdeals}):
\[
\rankdualM(T)=\rankA(S\setminus T)-\rankA(S)+|T|\text{ for }T\subseteq S
\]
Also, we know that the rank function of the transversal matroid defined by $A$ is given by (cf.~\cite[Proposition~4.2.3]{Bru87}):
\[
\rankA(T)=\min \{|\A{J'}\cap T|+d-|J'|\colon J'\subseteq [d]\} \text{ for }T\subseteq S.
\]
Combining the first equality evaluated at $\A{J}$ with the second one evaluated at $S\setminus \A{J}$, we obtain the following inequality:
\begin{equation}
\label{eq:dualrank}
\rankdualM(\A{J})\leq |\A{J}|-|J|.
\end{equation}
Equations~\eqref{eq:hyperplane} and~\eqref{eq:dualrank} then yield our claim that $\sum_{j\in J}q_j>\rankdualM(\A{J})$.
\end{proof}

\begin{proposition}
$\nonparking{A}\subseteq \ker \ringH{\repn{A}}$.
\end{proposition}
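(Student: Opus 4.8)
\emph{The plan.} Since $\ringH{\repn{A}}$ is a $\kk$-algebra homomorphism, $\ker\ringH{\repn{A}}$ is an ideal of $\polyx$; as $\nonparking{A}$ is generated by the monomials $x^q$ with $q\in\nnZd$, $q\notin\PMA$, it suffices to prove that $\ringH{\repn{A}}(x^q)=0$ in $\sfalgebra{\repn{A}}$ for every such $q$. Because $v_{j,s}=0$ precisely when $s\notin\A{j}$, we have
\[
\ringH{\repn{A}}(x^q)=\prod_{j\in[d]}\row_j^{\,q_j},\qquad \row_j=\sum_{s\in\A{j}}v_{j,s}\,y_s .
\]

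First I would expand this product in $\polyy$ and reduce it modulo the relations $y_s^2$, $s\in S$. Expanding without collecting terms, a summand is $\Bigl(\prod_{j\in[d]}\prod_{k=1}^{q_j}v_{j,s(j,k)}\Bigr)\cdot\prod_{j\in[d]}\prod_{k=1}^{q_j}y_{s(j,k)}$ for some choice of indices $s(j,k)\in\A{j}$; its coefficient is nonzero, and it survives the reduction modulo the $y_s^2$ only if all the indices $s(j,k)$ are pairwise distinct. In that case the sets $T_j:=\{s(j,1),\dots,s(j,q_j)\}$ satisfy $T_j\subseteq\A{j}$, $|T_j|=q_j$, and are pairwise disjoint, so $T:=T_1\cup\dots\cup T_d$ is a $q$-transversal of $A$ in the sense of Remark~\ref{rem:Parking}. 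Hence, modulo the relations $y_s^2$ alone, $\ringH{\repn{A}}(x^q)$ is a $\kk$-linear combination of the squarefree monomials $\prod_{s\in T}y_s$ as $T$ ranges over the $q$-transversals of $A$.

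It remains to invoke the relations $\prod_{s\in T}y_s$ for $T$ a cocircuit of $\matroid{\repn{A}}=\matroid{A}$. If a set $T\subseteq S$ contains a cocircuit then $\rankA(S\setminus T)<d$, i.e. $T$ is dependent in $\dualM$ (cf. Section~\ref{sec:pIdeals}); thus $\prod_{s\in T}y_s=0$ in $\sfalgebra{\repn{A}}$ for every $T$ that is not independent in $\dualM$. So I would conclude by checking that \emph{when $q\notin\PMA$, no $q$-transversal of $A$ is independent in $\dualM$}. Indeed, if $T=T_1\cup\dots\cup T_d$ is a $q$-transversal of $A$ that is independent in $\dualM$, then for every $J\subseteq[d]$ the set $\bigcup_{j\in J}T_j$ is contained in $\A{J}$ and is a subset of the $\dualM$-independent set $T$, hence $\dualM$-independent, so
\[
\submod{A}(J)=\rankdualM(\A{J})\ \geq\ \Bigl|\bigcup_{j\in J}T_j\Bigr|=\sum_{j\in J}q_j ;
\]
as this holds for all $J$, we get $q\in\PMA$. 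Contrapositively, if $q\notin\PMA$ every squarefree monomial occurring in the expansion of $\ringH{\repn{A}}(x^q)$ is annihilated, so $\ringH{\repn{A}}(x^q)=0$, and therefore $\nonparking{A}\subseteq\ker\ringH{\repn{A}}$.

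The step I expect to demand the most care is the combinatorial bookkeeping in the second paragraph: pinning down exactly which $y$-monomials survive the relations $y_s^2$ and recognizing their supports as $q$-transversals of $A$. The matroid-duality argument at the end is short and, notably, uses only the elementary ``half'' of Rado's theorem — that a $q$-transversal independent in $\dualM$ forces $q\in\PMA$ — and not its converse, which is the polymatroidal Rado/McDiarmid statement needed elsewhere (e.g. for Corollary~\ref{cor:polymatroidmonomials}).
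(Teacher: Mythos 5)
Your proof is correct and follows essentially the same route as the paper: identify the squarefree monomials surviving in the expansion of $\ringH{\repn{A}}(x^q)$ with $q$-transversals of $A$, then observe that no such transversal can be independent in $\dualM$ when $q\notin\PMA$, so every one is killed by a cocircuit relation. You also rightly note that only the elementary direction of Rado's theorem is needed here, a point the paper leaves implicit by referring back to the full equivalence in Remark~\ref{rem:Parking}.
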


\begin{proof}
Let $q\in\nnZd$ be such that $x^q\in \nonparking{A}$. The squarefree monomials in the expansion of the image $\ringH{A}(x^q)\in \sfalgebra{\repn{A}}$ can be seen as $q$-transversals of $A$, as defined in Remark~\ref{rem:Parking}. Since $q\notin \PMA$, no such $q$-transversal is independent in $\dualM$ or, equivalently, every such $q$-transversal is divisible by $\prod_{s\in T}y_s$ for some cocircuit $T$ of $\matroid{A}$. Thus $\ringH{A}(x^q)=0$.
\end{proof}

\begin{corollary}
\label{cor:polymatroidmonomials}
$\{x^q\colon q\in\PMA\cap \nnZd\}$ forms a basis for $\pAlgebra{\repn{A}}$. 
\end{corollary}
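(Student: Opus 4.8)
The plan is to close the chain of inclusions furnished by the two preceding propositions and then read off the basis combinatorially. First I would invoke Lemma~\ref{lem:powerkernel} with $V=\repn{A}$, which gives $\ker\ringH{\repn{A}}=\pIdeal{\repn{A}}$. Chaining this with the two propositions yields
\[
\pIdeal{\repn{A}}\subseteq\nonparking{A}\subseteq\ker\ringH{\repn{A}}=\pIdeal{\repn{A}},
\]
so that $\pIdeal{\repn{A}}=\nonparking{A}$ (which, incidentally, reproves Theorem~\ref{thm:powerismono}, since $\nonparking{A}$ is monomial by construction). In particular $\pAlgebra{\repn{A}}=\polyx/\nonparking{A}$.

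Next I would use the elementary fact that, for any monomial ideal $I\subseteq\polyx$, the residue classes of the monomials $x^p$ with $x^p\notin I$ form a $\kk$-basis of $\polyx/I$. Applied to $I=\nonparking{A}$, this reduces the claim to showing that $x^p\notin\nonparking{A}$ precisely when $p\in\PMA$. Since $\nonparking{A}$ is generated by $\{x^q\colon q\in\nnZd,\ q\notin\PMA\}$, a monomial $x^p$ lies in $\nonparking{A}$ exactly when $p$ componentwise dominates some $q\in\nnZd\setminus\PMA$.

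The one observation that makes this work is that $\PMA$ is a down-set in $\nnZd$: because $\submod{A}(J)=\rankdualM(\A{J})\geq 0$ for every $J\subseteq[d]$, if $p\in\PMA$ and $0\leq q\leq p$ componentwise then $\sum_{j\in J}q_j\leq\sum_{j\in J}p_j\leq\submod{A}(J)$, whence $q\in\PMA$. Contrapositively, if $x^p\in\nonparking{A}$ then $p$ dominates some $q\notin\PMA$, which (by the down-set property) forces $p\notin\PMA$; conversely, if $p\notin\PMA$ then $x^p$ is itself a generator of $\nonparking{A}$. Hence $x^p\notin\nonparking{A}$ if and only if $p\in\PMA$, and the monomials $\{x^q\colon q\in\PMA\cap\nnZd\}$ form a basis of $\pAlgebra{\repn{A}}$, as desired.

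As for obstacles: there is essentially none of substance at this stage, since all the work has been carried out in the two propositions and in Lemma~\ref{lem:powerkernel}. The only point requiring a moment's care is the down-set property of $\PMA$, which rests on the nonnegativity of the submodular function $\submod{A}$. As an optional sanity check one could verify that $|\PMA\cap\nnZd|$ matches the value at $z=1$ of the Tutte specialization in Theorem~\ref{thm:tutteHilbert}, but this is not needed for the argument.
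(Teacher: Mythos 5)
Your argument is correct and is precisely the one the paper intends: the two propositions together with Lemma~\ref{lem:powerkernel} force $\pIdeal{\repn{A}}=\nonparking{A}$, and then the standard monomials of the monomial quotient are exactly those with exponent vector in $\PMA$. The explicit verification of the down-set property of $\PMA$ is a worthwhile detail that the paper leaves implicit.
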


\section{Connection with Stanley-Reisner theory}
\label{sec:SRtheory}

As in~\cite[Chapter 4]{Ard03} and~\cite[Section 9]{PS04}, our proof of Lemma~\ref{lem:powerkernel} draws upon a vector space of polynomials associated to $V$ whose dimension can be calculated easily. To introduce it, let $v_s(x):=\sum_{j\in [d]} (v_s)_j x_j$ denote the linear form in $\polyx$ with coefficients given by the coordinates of the vector $v_s\in V$.

\begin{definition}
The \defn{cocircuit ideal of $V$} is following the ideal of $\polyx$:
\[
\cIdeal{V}=\left(\prod_{s\in T} v_s(x)\colon T\subseteq S\text{ cocircuit of }\matroid{V}\right)
\]
The \defn{cocircuit algebra of $V$} is the quotient $\cAlgebra{V}:=\polyx/\cIdeal{V}$.
\end{definition}

It is well-known that $\cAlgebra{V}$ has the same Hilbert series as $\pAlgebra{V}$ (see e.g.~\cite{Ber10,DM85,HR11}). We give a proof of this fact based on Theorem~\ref{thm:tutteHilbert} and on some elementary results in Stanley-Reisner theory. 

\begin{definition}
 Let $M$ be a matroid on a ground set $S$. The \defn{Stanley-Reisner ideal of $M$} is the ideal $\SRI{M}\subseteq \polyy$ generated by the monomials $\{\prod_{s\in T} y_s\}$, as $T\subseteq S$ ranges over the circuits of $M$.
The \defn{Stanley-Reisner ring of $M$} is the quotient ring $\kk[M]:=\polyy/\SRI{M}$. 
\end{definition} 

The following Lemma collects the preliminary results from Stanley-Reisner theory needed in the sequel. These are adaptations of more general statements to the particular context of representable matroids, relevant for our purposes.

\begin{lemma}
\label{lem:preliminarySR}
 Let $V=(v_s\colon s\in S)\subset \kk^d$ be a vector configuration which spans $\kk^d$ and $M=\matroid{V}$ be its rank-$d$ matroid.
 \begin{enumerate}[label=\thelemma(\alph*)]
 \item \label{it:lsop} The following $d$ linear forms are a linear system of parameters for $\kk[M]$:
 \[
 \theta_j:=\sum_{s\in S} (v_s)_jy_s,\quad j\in[d]
 \] 
where, as in Section~\ref{sec:pIdeals}, $(v_s)_j$ denotes the $j$-th coordinate of vector $v_s\in V$ \textup{(\cite[Lemma III.2.4]{Sta96})}.
 \item $\Hilb{\kk[M]/(\theta_1,\ldots,\theta_d)}=z^d\tutte{\dual{M}}(1,z^{-1})$, where $\dual{M}$ denotes the rank-$(n-d)$ matroid dual to $M$ \textup{(\cite[Equation (7.10) ff.]{Bjo92}, \cite[Theorem A3]{DP08})}.
 \item \label{it:spanningset} The quotient ring $\kk[M]/(\theta_1,\ldots,\theta_d)$ is spanned by the monomials $\{\prod_{s\in T}y_s$\}, where $T$ ranges over the independent sets of $M$ \textup{(\cite[Theorem III.2.5 ff.]{Sta96})}.
 \end{enumerate} 
\end{lemma}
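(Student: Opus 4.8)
The three items are specializations to representable matroids of standard facts in Stanley--Reisner theory, and the plan is to derive each of them from the sources quoted in the statement while taking advantage of the simplifications available here. The key structural observation is that, since $\SRI{M}$ is generated by the monomials on the circuits of $M$, the simplicial complex whose Stanley--Reisner ring is $\kk[M]$ is exactly the independence complex $\Delta_M$ of $M$, whose faces are the independent sets of $M$. As $M$ has rank $d$, the complex $\Delta_M$ is pure of dimension $d-1$; being a matroid independence complex it is shellable, hence Cohen--Macaulay over every field, so $\kk[M]$ is a Cohen--Macaulay ring of Krull dimension $d$. For \ref{it:lsop} I would invoke the standard criterion that $d$ linear forms form a linear system of parameters for the Stanley--Reisner ring of a pure $(d-1)$-dimensional complex precisely when, for every facet, their restrictions to the variables indexed by that facet are linearly independent. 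The facets of $\Delta_M$ are the bases of $M=\matroid{V}$, and the restriction of $\theta_j$ to a basis $B$ is $\sum_{s\in B}(v_s)_j y_s$, whose coefficient matrix is the $d\times d$ matrix with columns $(v_s\colon s\in B)$; this matrix is invertible exactly because $B$ is a basis of $\matroid{V}$. Hence $\theta_1,\dots,\theta_d$ is a linear system of parameters.

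For part~(b), since $\kk[M]$ is Cohen--Macaulay and $\theta_1,\dots,\theta_d$ is a linear system of parameters by \ref{it:lsop}, it is a regular sequence, so $\Hilb(\kk[M]/(\theta_1,\dots,\theta_d);z)=(1-z)^{d}\,\Hilb(\kk[M];z)$, which is by definition the $h$-polynomial of $\Delta_M$. It then remains to identify this $h$-polynomial with $z^{d}\tutte{\dualM}(1,z^{-1})$. Starting from $\tutteM(x,1)=\sum_{I}(x-1)^{d-|I|}$, the sum ranging over the independent sets $I$ of $M$, together with the standard transformation relating the $f$- and $h$-vectors of a $(d-1)$-dimensional complex, the substitution $x=z^{-1}$ gives that the $h$-polynomial of $\Delta_M$ equals $z^{d}\tutteM(z^{-1},1)$; Tutte--Whitney duality $\tutte{\dualM}(x,y)=\tutteM(y,x)$ then turns this into $z^{d}\tutte{\dualM}(1,z^{-1})$, as asserted. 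Alternatively one may simply quote the identities from the references in the statement.

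For \ref{it:spanningset}, first note that any monomial of $\polyy$ whose support contains a circuit of $M$ already vanishes in $\kk[M]$, so $\kk[M]/(\theta_1,\dots,\theta_d)$ is spanned by the images of the monomials $y^{a}$ with $\supp(a)$ independent in $M$. To pass to squarefree exponents, I would use that for a basis $B$ of $M$ and any $s_0\in B$, solving the relations $\theta_j\equiv 0$ for the coordinates indexed by $B$ (again using invertibility of the corresponding $d\times d$ matrix) yields an expression $y_{s_0}\equiv\sum_{s\in S\setminus B}\mu_s\,y_s\pmod{(\theta_1,\dots,\theta_d)}$ with $\mu_s\in\kk$. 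Given $y^{a}$ with independent support $F$ and $a_{s_0}\ge 2$ for some $s_0\in F$, extend $F$ to a basis $B\supseteq F$ and rewrite $y^{a}=y_{s_0}\,y^{a-e_{s_0}}\equiv\sum_{s\in S\setminus B}\mu_s\,y^{a-e_{s_0}+e_s}\pmod{(\theta_1,\dots,\theta_d)}$; each summand either has support containing a circuit (hence vanishes) or has independent support together with a strictly smaller value of the statistic $\sum_{t\in S}\max(a_t-1,0)$, since the exponent of $s_0$ drops by one while $s\notin B\supseteq F$ forces its exponent to rise only from $0$ to $1$, contributing nothing. Inducting on this statistic shows that the squarefree monomials $\prod_{s\in T}y_s$ with $T$ independent in $M$ span $\kk[M]/(\theta_1,\dots,\theta_d)$. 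None of the three items presents a genuine obstacle, since each is known; the step demanding the most care is the bookkeeping in part~(b) that converts the algebraic $h$-polynomial into $z^{d}\tutte{\dualM}(1,z^{-1})$ --- the combinatorial input being the Brylawski--Bj\"orner formula for the $h$-vector of a matroid complex and Tutte--Whitney duality --- together with verifying, in \ref{it:spanningset}, that the proposed reduction terminates.
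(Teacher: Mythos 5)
The paper does not actually prove Lemma~\ref{lem:preliminarySR}: it states all three items as known facts from the literature, with citations to Stanley~\cite{Sta96}, Bj\"orner~\cite{Bjo92}, and De Concini--Procesi~\cite{DP08}, and never opens a proof environment. Your proposal reconstructs the standard arguments behind those citations, and it is correct. In~\ref{it:lsop} the criterion you invoke (restrictions to each facet are linearly independent) is precisely the content of~\cite[Lemma III.2.4]{Sta96}; the reduction from ``all faces'' to ``all facets'' is legitimate because $\Delta_M$ is pure, and the matrix identification with $(v_s\colon s\in B)$ is exactly the point of the representability. In~(b) the chain $\Hilb(\kk[M]/(\theta))=(1-z)^d\Hilb(\kk[M])=h$-polynomial $=z^d\tutteM(z^{-1},1)=z^d\tutte{\dualM}(1,z^{-1})$ is the standard route (it does require Cohen--Macaulayness of $\kk[M]$, which you supply via shellability, to upgrade the l.s.o.p.\ to a regular sequence). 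In~\ref{it:spanningset} your descent on the excess statistic $\sum_t\max(a_t-1,0)$ is sound: extending the support to a basis $B$, eliminating $y_{s_0}$ via the $\theta$'s, and observing that any new variable $s\notin B$ enters with exponent exactly $1$ gives strict decrease, and monomials whose support becomes dependent die in $\kk[M]$. Since the paper offers no proof to compare against, the only remark worth making is that you have shown more than strictly necessary for the paper's purposes (which only uses the lemma as a black box), and that the argument is a faithful, slightly more self-contained version of what the quoted sources provide.
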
 

\begin{theorem}
\label{thm:isomorphism}
With the hypotheses of Lemma~\ref{lem:preliminarySR}, let $W=\left(w_s\colon s\in S\right)\subset\kk^{n-d}$ be such that $\matroid{W}=\dual{M}$, and $\theta_i:=\sum_{s\in S}(w_s)_iy_s$, for $ i\in[n-d]$, be the linear system of parameters for $\kk[\dual{M}]$ constructed from $W$ as in~\ref{it:lsop}. Then
\[
\cAlgebra{V}\cong\kk[\dual{M}]/(\theta_1,\ldots,\theta_{n-d}).
\] 
In particular, $\Hilb(\cAlgebra{V};z)=z^{n-d}\tutte{\matroid{V}}\left(1,{z}^{-1}\right)$, and $\cAlgebra{V}$ is spanned as a $\kk$-vector space by the products $
\{\prod_{s\in T} v_s(x)
\}$, where $T$ ranges over subsets $T\subseteq S$ with $\rank{\matroid{V}}(S\setminus T)=d$.
\end{theorem}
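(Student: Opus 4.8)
The plan is to realize both rings appearing in the statement as quotients of the polynomial ring $\polyy$, via one and the same graded surjection $\polyy\twoheadrightarrow\polyx$, and then to obtain the isomorphism from the isomorphism theorems. Concretely, let $\psi\colon\polyy\to\polyx$ be the $\kk$-algebra homomorphism determined by $y_s\mapsto v_s(x)$ for $s\in S$. Because $V$ spans $\kk^d$, the linear forms $v_s(x)$ span $\polyx_1$, so $\psi$ is surjective. The first task is to identify $\ker\psi$. In degree one, $\psi$ sends the form with coefficient vector $c\in\kk^{|S|}$ to the form with coefficient vector $Vc$, where $V$ is the $d\times|S|$ matrix whose columns are the $v_s$; hence $\ker\psi_1=\{c\colon Vc=0\}=\rs(V)^{\perp}$. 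Since $\matroid{W}=\dual M$ (so that $\rs(W)=\rs(V)^{\perp}$ by the characterization recalled in Section~\ref{sec:pIdeals}), and since the rows of $W$ are precisely the coefficient vectors of $\theta_1,\ldots,\theta_{n-d}$, we get $\ker\psi_1=\Span_\kk\{\theta_1,\ldots,\theta_{n-d}\}$. I would then upgrade this to the ideal statement $\ker\psi=(\theta_1,\ldots,\theta_{n-d})$ by a change of coordinates: complete $\theta_1,\ldots,\theta_{n-d}$ to a basis $\theta_1,\ldots,\theta_{n-d},u_1,\ldots,u_d$ of $\polyy_1$; then $\psi$ is the substitution that annihilates the $\theta_i$ and carries $u_1,\ldots,u_d$ to a $\kk$-basis of $\polyx_1$ (their images are linearly independent because $\ker\psi_1\cap\Span\{u_1,\ldots,u_d\}=0$), so its kernel is generated by the $\theta_i$.

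With $\ker\psi$ determined, I would match the remaining relations. The Stanley--Reisner ideal $\SRI{\dual M}$ is generated by the monomials $\prod_{s\in T}y_s$ as $T$ runs over the circuits of $\dual M$, equivalently the cocircuits of $\matroid V$; the surjection $\psi$ carries this generating set bijectively onto the generating set $\bigl\{\prod_{s\in T}v_s(x)\bigr\}$ of the cocircuit ideal $\cIdeal{V}$. Hence $\psi\bigl(\SRI{\dual M}\bigr)=\cIdeal{V}$, and therefore $\psi^{-1}(\cIdeal{V})=\SRI{\dual M}+\ker\psi=\SRI{\dual M}+(\theta_1,\ldots,\theta_{n-d})$. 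Passing to quotients yields
\[
\cAlgebra{V}=\polyx/\cIdeal{V}\;\cong\;\polyy/\bigl(\SRI{\dual M}+(\theta_1,\ldots,\theta_{n-d})\bigr)=\kk[\dual M]/(\theta_1,\ldots,\theta_{n-d}),
\]
an isomorphism of graded $\kk$-algebras under which the residue class of $y_s$ corresponds to $v_s(x)$.

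The two ``in particular'' assertions then follow by applying Lemma~\ref{lem:preliminarySR} to $\dual M$, which has rank $n-d$ and is represented by $W$, with the $\theta_i$ the resulting linear system of parameters. Part~(b), applied with $\dual M$ in place of $M$ and using that the dual of $\dual M$ is $\matroid V$, gives $\Hilb\bigl(\kk[\dual M]/(\theta_1,\ldots,\theta_{n-d});z\bigr)=z^{n-d}\tutte{\matroid V}(1,z^{-1})$, which equals $\Hilb(\cAlgebra{V};z)$ by the isomorphism just established. Part~(c), applied with $\dual M$ in place of $M$, shows that $\kk[\dual M]/(\theta_1,\ldots,\theta_{n-d})$ is spanned by the monomials $\prod_{s\in T}\overline{y}_s$ with $T$ independent in $\dual M$; transporting along the isomorphism, $\cAlgebra{V}$ is spanned by the products $\prod_{s\in T}v_s(x)$ with $T$ independent in $\dual M$. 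Finally, from $\rankdualM(T)=\rank{\matroid V}(S\setminus T)-d+|T|$ one sees that $T$ is independent in $\dual M$ precisely when $\rank{\matroid V}(S\setminus T)=d$, which is the stated indexing.

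The load-bearing step is the identification $\ker\psi=(\theta_1,\ldots,\theta_{n-d})$: it is here that the dual-representation fact $\rs(W)=\rs(V)^{\perp}$ enters, and here that one must pass from the degree-one computation of $\ker\psi_1$ to a statement about the full ideal by choosing complementary coordinates. Everything downstream is the correspondence between ideals of $\polyx$ and ideals of $\polyy$ containing $\ker\psi$, together with Lemma~\ref{lem:preliminarySR} applied to $\dual M$ with the ranks $d$ and $n-d$ interchanged.
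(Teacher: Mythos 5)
Your proposal is correct, and it is essentially the paper's argument in a cleaner guise: the paper constructs an explicit automorphism $g$ of $\polyyn$ (via block matrices built from $V$ and $W$) sending the $\theta_i$ to the last $n-d$ variables, whereas your surjection $\psi\colon y_s\mapsto v_s(x)$ is precisely $g$ composed with the projection killing those last $n-d$ variables, with the computation of $\ker\psi$ via $\rs(W)=\rs(V)^\perp$ playing the role of the paper's construction of $g^{-1}$. Your version makes the ideal correspondence and the identification $\psi(\SRI{\dual M})=\cIdeal{V}$ explicit, which the paper leaves implicit, but the underlying idea is the same.
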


\begin{proof}
For notational convenience, let us first identify the common ground set $S$ of $M$ and $\dualM$ with $[n]$, and assume without loss of generality that the set $\{1,2,\ldots, d\}\subset[n]$ is a basis of $M$, so $\{d+1,\ldots,n\}$ is a basis of $\dualM$.
Let $g^{-1}\in\glnk$ be a transformation acting on $\polyyn$ as $g^{-1}\colon y_{d+i}\mapsto \theta_i$ for $1\leq i\leq n-d$. We choose $g^{-1}$ so that it has the following matrix form when expressed in the basis $\{y_1,\ldots,y_n\}$, :
\[
g^{-1}=\begin{pmatrix}
B_1 & &\0_{d\times(n-d)}\\
w_1 & \ldots & w_n\\
\kern.6em\vline & & \kern.6em\vline
\end{pmatrix}, \text{ so that }%
g=\begin{pmatrix}
v_1^\T \rule[.5ex]{3em}{0.4pt}\ & \0_{d\times(n-d)} \\
\vdots  & \\
v_n^\T \rule[.5ex]{3em}{0.4pt}\ & B_2 
\end{pmatrix} ,
\]
where $\0_{d\times(n-d)}$ denotes a $d\times(n-d)$ matrix of zeros, and $B_1\in\kk^{d\times d},B_2\in\kk^{(n-d)\times (n-d)}$ are suitable nonsingular matrices (which exist, by our assumption that $\{1,\ldots,d\}$ is a basis of $M$, and are uniquely determined). It follows that $g\cdot \kk[\dualM]/(\theta_1,\ldots,\theta_{n-d})\cong \cAlgebra{V}$, which establishes the isomorphism. The remaining statement follows by acting with $g$ on the spanning set in~\ref{it:spanningset}.
\end{proof}

\begin{proof}[Proof of Lemma~\ref{lem:powerkernel}]
Clearly, the ideal of relations $\ker \ringH{V}\subset\polyx$ of the squarefree algebra $\im(\ringH{V})$ contains the power ideal $\pIdeal{V}$. 
Indeed, the linear form $\ell_H$ associated to a hyperplane $H$ of $V$ maps to a linear form $\ringH{V}(\ell_H)\in\sfalgebra{V}$ with $s$-th coefficient equal to $\ell_H(v_s)$, which vanishes if and only if $v_s\in H$. It follows that the nonvanishing coefficients of $\ringH{V}(\ell_H)$ are indexed by elements in the complement of $H$ in $V$, which is a cocircuit $T$ of $V$. Since the only squarefree term of $\ringH{V}(\lPowerH)$ is a scalar multiple of $\prod_{s\in T}y_s$, we get $\lPowerH\in\ker \ringH{V}$. In particular, this implies the following inequality, understood coefficientwise:
\begin{equation}
\label{eq:hilbIneq}
\Hilb(\im(\ringH{V});z)\leq\Hilb(\pAlgebra{V};z).
\end{equation}

To prove the containment $\ker \ringH{V}\subseteq \pIdeal{V}$, we reproduce the linear-algebraic argument in~\cite[Chapter 4]{Ard03} and~\cite[Section 9]{PS04} to establish the equality of the dimensions of the graded components of $\im(\ringH{V})$ and $\cAlgebra{V}$. Then, by Theorems~\ref{thm:tutteHilbert} and~\ref{thm:isomorphism}, inequality~\eqref{eq:hilbIneq} holds with equality, so $\ker \ringH{V}= \pIdeal{V}$.

By Theorem~\ref{thm:isomorphism}, the $k$-th graded component $\cAlgebra{V}_k$ of $\cAlgebra{V}$ is spanned by the products
\[
\prod_{s\in T} v_s(x):=\prod_{s\in T} \left(\sum_{j\in [d]} (v_s)_j x_j\right),
\] 
where $T\subseteq S$ ranges over subsets with $|T|=k$ and $\rank{\matroid{V}}(S\setminus T)=d$. On the other hand, the $k$-th graded component $\im(\ringH{V})_k$ of $\im(\ringH{V})$ is spanned by the products 
\[
\prod_{j\in [d]}r_j^{q_j}:=\prod_{j\in [d]} \left(\sum_{s\in S}(v_s)_jy_s\right)^{q_j},
\]
where $q\in\nnZd$ ranges over exponent vectors with $\sum_{j\in[d]}q_j=k$. 

Given $T\subseteq S$ with $|T|=k$ and $\rank{\matroid{V}}(S\setminus T)=d$, and $q\in\nnZd$ with $\sum_{j\in[d]}q_j=k$, denote by $\lambda_{T,q}\in\kk$ the coefficient of $x^q$ in the expansion of $\prod_{s\in T}v_s(x)$ and by $\mu_{T,q}\in\kk$ the coefficient of $\prod_{s\in T}y_s$ in the expansion of $\prod_{j\in [d]}r_j^{q_j}$. Then $\lambda_{T,q}=\mu_{T,q}$. The claim follows since the dimension of $\cAlgebra{V}_k$ (resp. $\im(\ringH{V})_k$) is given by the rank of the matrix with rows labeled by $\{T\subseteq S\colon |T|=k,\ \rank{\matroid{V}}(S\setminus T)=d\}$, columns labeled by $\{q\in\nnZd\colon \sum_{j\in[d]}q_j=k\}$, and entries $\lambda_{T,q}$ (resp. $\mu_{T,q}$). 
\end{proof}

\begin{remark}
\label{rem:conjStanley}
A remarkable consequence in Stanley-Reisner theory\footnote{Which holds more generally for Stanley-Reisner rings of Cohen-Macaulay simplicial complexes modulo linear systems of parameters.} is that the (nonnegative integer) coefficients of the Hilbert series 
\[
\Hilb\left(\kk[\dualM]/(\theta_1,\ldots,\theta_{n-d});z\right)=h_0+h_1z+\ldots+h_{n-d}z^{n-d}
\]
are precisely the entries of the \defn{$h$-vector} $(h_0,\ldots,h_{n-d})$ of $\dualM$, a combinatorial invariant of the \defn{independence complex} of $\dualM$, which is defined as the simplicial complex of independent sets of the matroid $\dualM$. 

The study of numerical properties of $h$-vectors (such as log-concavity or unimodality) and of other combinatorial invariants of matroids is an active subject of research that has experienced major breakthroughs in recent years (e.g.~\cite{AHK17}). 

In this regard, Stanley conjectured in 1977 that $h$-vectors of matroids are \defn{pure $O$-sequences} (cf.~\cite[Conjecture III.3.6]{Sta96}). This means that given a matroid $h$-vector $(h_0,h_1,\ldots)$, there is a set of monomials $\cQ\subset\kk[x_1,\ldots,x_{h_1}]$ such that 
\begin{enumerate*}[label=(\roman*)]
\item if $m_1\in \cQ$ and $m_2\mid m_1$, then $m_2\in \cQ$ (that is, $\cQ$ is an \defn{order ideal of monomials}),
\item $\cQ$ contains exactly $h_k$ monomials of degree $k$, for $k\in\nnZ$, and
\item the maximal monomials of $\cQ$ with respect to divisibility have the same degree (that is, $\cQ$ is~\defn{pure}).
\end{enumerate*}

It is well-known (and not difficult to prove) that the integer vectors of a polymatroid can be regarded as the exponent vectors of a pure order ideal of monomials (see e.g.~\cite[Theorem 44.5]{Schr03}). Therefore, in light of Theorem~\ref{thm:tutteHilbert} and Lemma~\ref{lem:preliminarySR}, Corollary~\ref{cor:polymatroidmonomials} implies that Stanley's conjecture holds for the family of matroids dual to transversal matroids, known as  \defn{strict gammoids} or \defn{cotransversal matroids}. This fact had already been established by Oh in~\cite{Oh13}. In the author's opinion, it is rather surprising that Oh's proof also relies on the construction of a polymatroid associated to a set system, even though his methods are completely different.

Incidentally (and seemingly unbeknownst to them), the work of Postnikov and Shapiro also brought about a new proof of Stanley's conjecture for the family of matroids dual to graphic matroids, which had originally been settled by Merino using the language of the chip firing game on graphs~\cite{Mer01}.

\noindent Are there further instances of Stanley's conjecture that might yield to power ideals?
\end{remark}


\bibliographystyle{abbrv}
\bibliography{matroids}

\end{document}